\numberwithin{equation}{section}       
\numberwithin{figure}{section}       
\theoremstyle{plain}
\newtheorem{prop}{Proposition}[section]
\newtheorem{lemm}[prop]{Lemma}
\newtheorem{fact}[prop]{Fact}
\newtheorem{theoalph}{Theorem}
\newtheorem{coroalph}[theoalph]{Corollary}
\theoremstyle{definition}
\newtheorem{defi}[prop]{Definition}
\theoremstyle{remark}
\newtheorem{rema}[prop]{Remark}
\newtheorem{ques}[prop]{Question}
\newtheoremstyle{citing}
  {3pt}
  {3pt}
  {\itshape}
  {}
  {\bfseries}
  {.}
  {.5em}
  {\thmnote{#3}}
\theoremstyle{citing}
\newtheorem*{generic}{}
\DeclareMathAlphabet{\mathpzc}{OT1}{pzc}{m}{it} 
\newcommand{\R}{\mathbb{R}}
\newcommand{\cO}{\mathcal{O}}
\newcommand{\sA}{\mathscr{A}}
\newcommand{\sM}{\mathscr{M}}
\newcommand{\hI}{\widehat{I}}
\newcommand{\hJ}{\widehat{J}}
\newcommand{\hW}{\widehat{W}}
\newcommand{\hlambda}{\widehat{\lambda}}
\newcommand{\tI}{\widetilde{I}}
\newcommand{\tJ}{\widetilde{J}}
\newcommand{\teta}{\widetilde{\teta}}
\newcommand{\partn}[1]{{\smallskip \noindent \textbf{#1.}}}
\renewcommand{\=}{: =}
\newcommand{\dd}{\hspace{1pt}\operatorname{d}\hspace{-1pt}}
\DeclareMathOperator{\Lip}{Lip} 
\DeclareMathOperator{\dist}{dist}
\DeclareMathOperator{\supp}{supp} 
\DeclareMathOperator{\Crit}{Crit} 
\DeclareMathOperator{\Leb}{Leb}
\DeclareMathOperator{\Exp}{Exp}
\DeclareMathOperator{\per}{per}
\newcommand{\chiinf}{\chi_{\inf}}
\newcommand{\chiinff}{\chi_{\inf}(f)}
\newcommand{\chiper}{\chi_{\per}}
\newcommand{\chiperf}{\chiper(f)}
\newcommand{\ellmax}{\ell_{\max}}
\newcommand{\hellmax}{\widehat{\ellmax}}
\newcommand{\whf}{\widehat{f}}
\newcommand{\wtf}{\widetilde{f}}
\begin{document}

\title{Asymptotic expansion of smooth interval maps}
\author{Juan Rivera-Letelier}
\address{Department of Mathematics, University of Rochester. Hylan Building, Rochester, NY~14627, U.S.A.}
\email{riveraletelier@gmail.com}
\urladdr{\url{http://rivera-letelier.org/}}

\begin{abstract}
We associate to each non-degenerate smooth interval map a number measuring its global asymptotic expansion.
We show that this number can be calculated in various different ways.
A consequence is that several natural notions of nonuniform hyperbolicity coincide.
In this way we obtain an extension to interval maps with an arbitrary
number of critical points of the remarkable result of Nowicki and Sands
characterizing the Collet-Eckmann condition for unicritical maps.
This also solves a conjecture of Luzzatto in dimension~$1$.

Combined with a result of Nowicki and Przytycki, these considerations imply that several natural nonuniform hyperbolicity conditions are invariant under topological conjugacy.
Another consequence is for the thermodynamic formalism of
non-degenerate smooth interval maps: A non-degenerate smooth map has
a high-temperature phase transition if and only if it is not Lyapunov hyperbolic.
\end{abstract}

\maketitle

%
%

\section{Introduction}
In the last few decades, the statistical and stochastic properties of nonuniformly hyperbolic maps have been extensively studied in the one-dimensional setting, see for example~\cite{BruLuzvSt03,GraSmi09,KelNow92,RivShe14,She13,You92} and references therein.
These maps are known to be abundant, see for
example~\cite{AviMor05c,BenCar85,Jak81,GaoShe14,Lyu02,Tsu01b,WanYou06} for interval maps and~\cite{Asp13,Ree86,Smi00,GraSwi00} for complex rational maps.

In this paper we associate to each non-degenerate smooth interval
map a number measuring its global asymptotic expansion.
Our main result is that this number can be calculated in various
different ways.
For example, it can be calculated using the Lyapunov exponents of
periodic points or the Lyapunov exponents of invariant measures, and it can also be calculated using the exponential contraction rate of preimages of a small ball.
This implies that several natural notions of nonuniform hyperbolicity
coincide, including the existence of an absolutely continuous
invariant probability (acip) that is exponentially mixing.
In this way we obtain an extension to interval maps with an arbitrary
number of critical points of the remarkable result of Nowicki and Sands
characterizing the Collet-Eckmann condition for unicritical maps,
see~\cite{NowSan98}.
Moreover, this solves in the affirmative a conjecture of Luzzatto in dimension~$1$, see~\cite[Conjecture~$1$]{Luz06}.

Combined with a result of Nowicki and Przytycki, we obtain that several natural notions of nonuniform hyperbolicity are invariant under topological conjugacy, see~\cite{NowPrz98}.
In particular, for non-degenerate smooth interval maps the existence of an exponentially mixing acip is invariant under topological conjugacy.

Combined with~\cite{Gou05, MelNic05, MelNic09, Tyr05, You99}, these
considerations imply that an arbitrary exponentially mixing
acip satisfies strong statistical properties, such as the local central limit theorem and the vector-valued almost sure invariant principle.
On the other hand, by~\cite{RivShe14} it follows that for some~$p > 1$ the density of such a measure is in the space~$L^p(\Leb)$.

Our main result provides an important step in the study of the
thermodynamic formalism of non-degenerate smooth interval maps in~\cite{PrzRiv1405}.\footnote{The proof of our Main Theorem applies without change to the more general class of maps
  considered in~\cite{PrzRiv1405}, see Theorem~C of that paper.
Note however that, although the proof in~\cite{PrzRiv1405} follows the
proof of our Main Theorem, it has a part that is different.
This modified proof only gives a qualitative version of our Main
Theorem, similar to Corollary~\ref{c:equivalences}.}
Combining our main result with~\cite[Theorem~A]{PrzRiv1405}, we obtain
a characterization of those maps having a high-temperature phase transition.

We proceed to describe our results more precisely.
To simplify the exposition, below we state our results in a more restricted setting than what we are able to handle.
For general versions, see~\S\ref{s:quantified expansion} and the remarks in~\S\ref{ss:proof of corollaries}.

\subsection{Quantifying asymptotic expansion}
\label{ss:quantified expansion}
Let~$I$ be a compact interval and~${f \colon I \to I}$ a smooth map.
A \emph{critical point of~$f$} is a point of~$I$ at which the derivative of~$f$ vanishes.
The map~$f$ is \emph{non-degenerate} if it is non-injective, if the number of its critical points is finite, and if at each critical point of~$f$ some higher order derivative of~$f$ is nonzero.
A non-degenerate smooth interval map is \emph{unicritical} if it has a
unique critical point.\footnote{Note that every unicritical map is
  unimodal, but not conversely.}

Let~$f \colon I \to I$ be a non-degenerate smooth map.
For an integer~$n \ge 1$, a periodic point~$p$ of~$f$ of period~$n$ is \emph{hyperbolic repelling} if~$|Df^n(p)| > 1$.
In this case, denote by
$$ \chi_{p}(f) \=  \frac{1}{n} \ln |Df^n(p)| $$
the Lyapunov exponent of~$p$.
Similarly, for a Borel probability measure~$\nu$ on~$I$ that is invariant by~$f$ denote by
$$ \chi_{\nu}(f) \= \int \ln |Df| \dd \nu $$
its Lyapunov exponent.

The following is our main result.
A non-degenerate smooth map $f \colon I \to I$ is \emph{topologically exact}, if for every open subset~$U$ of~$I$ there is an integer~$n \ge 1$ such that~$f^n(U) = I$.
\begin{generic}[Main Theorem]
For a non-degenerate smooth map~$f \colon I \to I$, the number
$$ \chiperf
\=
\inf \left\{ \chi_p(f) : p \text{ hyperbolic repelling periodic point of } f \right\} $$
is equal to
$$ \chiinff
\=
\inf \left\{ \chi_\nu(f) : \nu \text{ invariant probability measure of } f \right\}. $$
If in addition~$f$ is topologically exact, then there is~$\delta > 0$ such that for every interval~$J$ contained in~$I$ that satisfies~$|J| \le \delta$, we have
$$ \lim_{n \to + \infty} \frac{1}{n} \ln \max \left\{ |W| : W \text{ connected component of } f^{-n}(J) \right\}
=
- \chiinff. $$
Moreover, for each point~$x_0$ in~$I$ we have
\begin{equation}
\label{e:CE2}
\limsup_{n \to + \infty} \frac{1}{n} \ln \min \left\{ |Df^n(x)| : x \in f^{-n}(x_0) \right\}
\le
\chiinff,
\end{equation}
and there is a subset~$E$ of~$I$ of zero Hausdorff dimension such that for each point~$x_0$ in~$I \setminus E$ the $\limsup$ above is a limit and the inequality an equality.
\end{generic}

Except for the equality~$\chiinff = \chiperf$, the hypothesis that~$f$ is topologically exact is necessary, see~\S\ref{ss:notes and references}.

The result above suggests that for a non-degenerate smooth map~$f$ the
number~$\chiperf$ (equal to~$\chiinff$) is a natural measure of the asymptotic expansion of~$f$.
In fact, $\chiinf(f)$ gives a lower bound for the (lower) Lyapunov exponent
of every point in a set of total probability.
This motivates the following definition.
\begin{defi}
\label{d:Lyapunov hyperbolicity}
A non-degenerate smooth map~$f$ is \emph{Lyapunov hyperbolic} if~$\chiinff > 0$.
In this case, we call~$\chiinff$ the \emph{total Lyapunov exponent of~$f$}.
\end{defi}
Lyapunov hyperbolicity can be regarded as a strong form of nonuniform hyperbolicity in the sense of Pesin.
A consequence of the Main Theorem is that Lyapunov hyperbolicity coincides with several natural nonuniform hyperbolicity conditions, see~\S\ref{ss:NUH}.

When restricted to the case where~$f$ is unicritical, the Main Theorem gives a quantified version of the fundamental part of~\cite[Theorem~A]{NowSan98}.
In~\cite[Theorem~A]{NowSan98}, property~\eqref{e:CE2} was only considered in the case where~$x_0$ is the critical point of~$f$; so the assertions concerning~\eqref{e:CE2} in the Main Theorem are new, even when restricted to the case where~$f$ is unicritical.
The proof of~\cite[Theorem~A]{NowSan98} relies heavily on delicate combinatorial arguments that are specific to unicritical maps.
As is, it does not extend to interval maps with several critical points.
When restricted to unicritical maps, our argument is substantially simpler than that of~\cite{NowSan98}.

When~$f$ is a complex rational map, the Main Theorem is the essence of~\cite[Main Theorem]{PrzRivSmi03}.
The proof in~\cite[Main Theorem]{PrzRivSmi03} does not extend to interval maps, because at a key point it relies on the fact that a complex rational map is open as a map of the Riemann sphere to itself.
Our argument allows us to deal with the fact that a non-degenerate smooth interval map is not an open map in general, see~\S\ref{ss:organization} for further details.

\subsection{Nonuniformly hyperbolic interval maps}
\label{ss:NUH}
We introduce some terminology to state a consequence of the Main
Theorem about the equivalence of various nonuniform hyperbolicity conditions.

Let~$(X, \dist)$ be a compact metric space, $T \colon X \to X$ a continuous map and~$\nu$ a Borel probability measure that is invariant by~$T$.
Then~$\nu$ is \emph{exponentially mixing} or \emph{has exponential decay of correlations}, if there are constants~$C > 0$ and~$\rho$ in~$(0, 1)$ such that for every continuous function~$\varphi \colon X \to \R$ and every Lipschitz continuous function~$\psi\colon X \to \R$ we have for every integer~$n \ge 1$
$$ \left| \int_X \varphi \circ f^n \cdot \psi \dd \nu - \int_X \varphi \dd \nu \int_X \psi \dd \nu \right|
\le
C \left( \sup_X |\varphi| \right) \| \psi \|_{\Lip} \rho^n, $$
where~$\| \psi \|_{\Lip} \= \sup_{x, x' \in X, x \neq x'} \frac{|\psi(x) - \psi(x')|}{\dist(x, x')}$.

We denote by~$\Leb$ the Lebesgue measure on~$\R$.
For a non-degenerate smooth map~$f \colon I \to I$, we use \emph{acip} to refer to a Borel probability measure on~$I$ that is absolutely continuous with respect~$\Leb$ and that is invariant by~$f$.

A non-degenerate smooth map~$f \colon I \to I$ has \emph{Uniform
  Hyperbolicity on Periodic Orbits}, if~$\chiperf > 0$.
Moreover, $f$ satisfies the:
\begin{itemize}
\item
\emph{Collet-Eckmann condition}, if all the periodic points of~$f$ are hyperbolic repelling and if for every critical value~$v$ of~$f$ we have
$$ \liminf_{n \to + \infty} \frac{1}{n} \ln |Df^n(v)| > 0. $$
\item
\emph{Backward} or \emph{Second Collet-Eckmann condition at a point~$x$ of~$I$}, if there are constants~$C > 0$ and~$\lambda > 1$, such that for every integer~$n \ge 1$ and every point~$y$ of~$f^{-n}(x)$ we have~$|Df^n(y)| \ge C \lambda^n$.
\item
\emph{Backward} or \emph{Second Collet-Eckmann condition}, if~$f$ satisfies the Backward Collet-Eckmann condition at each of its critical points.
\item
\emph{Exponential Shrinking of Components condition}, if there are
constants~$\delta > 0$ and~$\lambda > 1$ such that for every
interval~$J$ contained in~$I$ that satisfies~$|J| \le \delta$, the
following holds: For every integer~$n \ge 1$ and every connected
component~$W$ of~$f^{-n}(J)$ we have $|W| \le \lambda^{-n}$.
\end{itemize}

In the statement of the following corollary we use the following fact: Every non-degenerate smooth map that is topologically exact has strictly positive topological entropy and a unique measure of maximal entropy, see for example~\cite[\S$3$]{Bal00b}.
Finally, a measure~$\rho$ on~$I$ has a \emph{power-law lower bound}, if there are constants~$C > 0$ and~$\alpha > 0$ such that for every interval~$J$ contained in~$I$ we have~$\rho(J) \ge C |J|^\alpha$.

\begin{coroalph}
\label{c:equivalences}
For a non-degenerate smooth map~$f \colon I \to I$ that is topologically exact, the following properties are equivalent:
\begin{enumerate}
\item[1.]
Lyapunov hyperbolicity ($\chiinff > 0$).
\item[2.]
Uniform Hyperbolicity on Periodic Orbits ($\chiperf > 0$).
\item[3.]
Existence of an exponentially mixing acip for~$f$.
\item[4.]
The map~$f$ is conjugated to a piecewise affine and expanding multimodal map by a bi-H{\"o}lder continuous function.
\item[5.]
The map~$f$ satisfies the Exponential Shrinking of Components condition.
\item[6.]
The map~$f$ satisfies the Backward Collet-Eckmann condition at some
point of~$I$.
\item[7.]
The maximal entropy measure of~$f$ has a power-law lower bound.
\end{enumerate}
Furthermore, these equivalent conditions are satisfied when~$f$ satisfies the Collet-Eckmann or the Backward Collet-Eckmann condition.
\end{coroalph}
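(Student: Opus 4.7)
The plan is to deduce the equivalences of conditions~(1), (2), (5), (6) directly from the Main Theorem, and to link these to the measure-theoretic condition~(3) and the structural condition~(4) by invoking existing results from the literature on non-uniformly hyperbolic interval maps.

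The equality $\chiinff = \chiperf$ given by the Main Theorem yields (1) $\Leftrightarrow$ (2) at once. Under topological exactness, the Main Theorem further asserts that $\tfrac{1}{n} \ln \max\{|W| : W \text{ component of } f^{-n}(J)\}$ converges to $-\chiinff$ uniformly in sufficiently small intervals~$J$, so the Exponential Shrinking of Components condition is equivalent to $\chiinff > 0$; this is (1) $\Leftrightarrow$ (5). For (1) $\Leftrightarrow$ (6), I would use the backward Lyapunov exponent statement of the Main Theorem: in the forward direction, any $x_0$ outside the exceptional set~$E$ of zero Hausdorff dimension satisfies $\tfrac{1}{n} \ln \min\{|Df^n(y)| : y \in f^{-n}(x_0)\} \to \chiinff > 0$, from which the Backward Collet-Eckmann condition at~$x_0$ follows after absorbing finitely many small-$n$ terms into a multiplicative constant. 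For the reverse direction, if the Backward Collet-Eckmann condition holds at some~$x_0$ with rate~$\lambda$, then the same $\limsup$ is bounded below by $\ln \lambda > 0$, and by the Main Theorem bounded above by $\chiinff$, whence $\chiinff \ge \ln \lambda > 0$.

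To close the equivalence with~(3) and~(4), I would invoke existing constructions. For (5) $\Rightarrow$ (3), the existence of an exponentially mixing acip under the Exponential Shrinking of Components condition is a Young-tower construction available for non-degenerate smooth interval maps, as carried out in~\cite{BruLuzvSt03} and related references. For the converse (3) $\Rightarrow$ (5), exponential decay of correlations of an acip together with topological exactness yields exponential shrinking of preimage components of Lebesgue-typical intervals, hence~(5). For~(4), the direction (4) $\Rightarrow$ (5) is immediate, since any piecewise affine expanding multimodal map trivially satisfies the Exponential Shrinking of Components condition and this condition is preserved under bi-Hölder conjugation: a Hölder homeomorphism of exponent~$\alpha$ sends exponential length bounds to exponential length bounds raised to the power~$\alpha$, which remain exponential. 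The reverse (5) $\Rightarrow$ (4) is a Markov coding argument that associates to~$f$ a piecewise affine expanding model together with a Hölder conjugacy.

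For the final assertion, the Backward Collet-Eckmann condition implies~(6) by definition since~$f$ has at least one critical point. The Collet-Eckmann condition yields~(5) by the standard propagation of forward expansion along critical orbits to exponential shrinking of all inverse branches, as in~\cite{NowSan98} for unimodal maps and known extensions to the multimodal setting. The principal obstacle is not the core equivalence (1)-(2)-(5)-(6), which falls out cleanly from the Main Theorem, but the implications involving~(3) and~(4), most notably (5) $\Rightarrow$ (4), which rely on a delicate Markov coding and the construction of a Hölder conjugacy and depend on substantial prior work rather than on the new content of the Main Theorem.
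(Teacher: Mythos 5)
Your derivation of the core equivalences $1\Leftrightarrow 2\Leftrightarrow 5\Leftrightarrow 6$ from the Main Theorem is essentially what the paper does, including the use of the exceptional set $E$ of zero Hausdorff dimension for $1\Rightarrow 6$ and the $\limsup$ bound for $6\Rightarrow 1$, so that portion is sound.

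The issues lie in how you connect to conditions~(3) and~(4). For $5\Rightarrow 3$ your reference to~\cite{BruLuzvSt03} does not apply: that work assumes the Collet--Eckmann condition, which for multimodal maps is strictly stronger than Exponential Shrinking of Components; the relevant source is~\cite[Theorem~C]{PrzRiv07} or~\cite[Remark~$2.14$]{RivShe1004}, which take ESC as the hypothesis. For the converse, your sketch going $3\Rightarrow 5$ directly---``exponential decay of correlations yields exponential shrinking of preimage components of Lebesgue-typical intervals''---does not suffice, since ESC demands exponential shrinking of \emph{all} pull-backs, not just those seen by Lebesgue; the paper instead shows $3\Rightarrow 2$ via~\cite[Lemma~$8.2$]{NowSan98}, closing the circle through the already-established equivalence of~$2$ with~$5$. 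Finally, your description of $5\Rightarrow 4$ as ``a Markov coding argument'' that ``depends on substantial prior work'' misses the actual content: the piecewise affine model comes from Parry and Milnor--Thurston as you'd expect, but the bi-H\"older regularity of the conjugacy is new to this paper (Lemma~\ref{l:affine model satisfies ESC} shows the affine model itself satisfies ESC, and Proposition~\ref{p:conjugacy to affine} shows that any conjugacy from a Lipschitz multimodal map to an ESC map is H\"older, applied once in each direction). Without identifying something like Proposition~\ref{p:conjugacy to affine}, the claim that the coding yields a H\"older conjugacy is unsupported. Your $4\Rightarrow 5$ via invariance of ESC under bi-H\"older conjugacy is a valid alternative to the paper's $4\Rightarrow 2$ through~\cite[Lemma~$8.4$]{NowSan98}, and this is a genuine (if minor) simplification.
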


The equivalence $1 \Leftrightarrow 3$ solves \cite[Conjecture~$1$]{Luz06} in dimension~$1$.

When~$f$ is unicritical, the equivalence of conditions~$1$--$5$ was proved by~Nowicki and Sands in~\cite[Theorem~A]{NowSan98}.
They also showed, still in the case where~$f$ is unicritical, that the Collet-Eckmann and the Backward Collet-Eckmann conditions are equivalent and that each of these conditions is equivalent to conditions~$1$--$5$.
In contrast, for maps with several critical points the Collet-Eckmann and the Backward Collet-Eckmann conditions are not equivalent and neither of these conditions is equivalent to conditions~$1$--$7$, see~\cite[\S$6$]{PrzRivSmi03}.
When~$f$ is a complex rational map, a statement analog to
Corollary~\ref{c:equivalences} was shown by Przytycki, Smirnov, and
the author in~\cite[Main Theorem]{PrzRivSmi03},\footnote{In~\cite{PrzRivSmi03} condition~$4$ was
  interpreted as the existence of a ``H{\"o}lder coding tree.''} \cite[Corollary~$1.1$]{PrzRiv07} and~\cite[Theorem~B]{Riv10}.

Even when restricted to the case where~$f$ is unicritical, the
implication~$6 \Rightarrow 5$ of Corollary~\ref{c:equivalences} is new.
It is the main new ingredient of the proof, which is provided by Main Theorem.
The implication~$5 \Rightarrow 4$ is also new.
The rest of the implications are known, or can be easily adapted from known properties of unicritical interval maps or complex rational maps, see~\S\ref{ss:proof of corollaries} for references.

\subsection{Exponentially mixing acip's}
\label{ss:exponentially mixing acips}
Let~$f \colon I \to I$ be a non-degenerate smooth map that is topologically
exact and that is Lyapunov hyperbolic.
Such a map has a unique exponentially mixing acip.
In~\cite[Theorem~C]{PrzRiv07}, this measure is constructed using the general method of Young in~\cite{You99}.\footnote{The proof of~\cite[Theorem~C]{PrzRiv07} is written for complex rational maps and applies without change to topologically exact non-degenerate smooth interval maps. See~\cite[Corollary~$2.19$]{RivShe14} for a proof written for interval maps.}
When a measure~$\nu$ on~$I$ can be obtained in this way, we say \emph{$\nu$ can be obtained through a Young tower with an exponential tail estimate}.
Such a measure has several statistical properties, including the ``local central limit theorem'' and the ``vector-valued almost sure invariant principle,'' see~\cite{MelNic09, You99} for these results and for precisions, and~\cite{Gou05, MelNic05, Tyr05} for other statistical properties satisfied by such a measure.

On the other hand, for~$f$ as above there is~$p(f) > 1$ with the
following property: For~$p \ge 1$ the density of the unique exponentially mixing acip of~$f$ is in the space~$L^p(\Leb)$ if~$1 \le p < p(f)$, and it is not in~$L^p(\Leb)$ if~$p > p(f)$.
See~\cite[Corollary~$2.19$]{RivShe14}, where a geometric characterization of~$p(f)$ is also given.\footnote{If~$f$ is unicritical and we denote its critical point by~$c$, then~$p(f) = \ell_c / (\ell_c - 1)$.}

In view of the results above, the following corollary is a direct consequence of Corollary~\ref{c:equivalences} and of general properties of non-degenerate smooth interval maps.
\begin{coroalph}
  \label{c:exponentially mixing acips}
Let~$f$ be a non-degenerate smooth interval map having an exponentially mixing acip~$\nu$.
Then there is~$p > 1$ such that the density of~$\nu$ with respect to~$\Leb$ is in the space~$L^p(\Leb)$.
Moreover, $\nu$ can be obtained through a Young tower with an exponential tail estimate.
In particular, $\nu$ satisfies the local central limit theorem and the vector-valued almost sure invariant principle.
\end{coroalph}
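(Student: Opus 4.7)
The plan is to reduce to the topologically exact setting where Corollary~\ref{c:equivalences} applies, and then to invoke the literature cited in~\S\ref{ss:exponentially mixing acips}. The only genuinely new step is this reduction; the remaining assertions follow by direct citation.

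First I would analyze the support $S \= \supp(\nu)$. Since~$\nu$ is exponentially mixing, it is in particular mixing and ergodic. By the standard structure theory of acip's of non-degenerate smooth interval maps (see for example~\cite{BruLuzvSt03,KelNow92}), $S$ is a finite disjoint union of closed intervals $J_0, \ldots, J_{k-1}$ cyclically permuted by~$f$, and some iterate of~$f$ is topologically exact on the interior of each~$J_i$. Mixing of~$\nu$ for~$f$ forces $k = 1$: otherwise, taking non-negative bump functions concentrated on two distinct~$J_i$'s yields vanishing correlations along an arithmetic progression while the product of their $\nu$-integrals is positive, contradicting decay of correlations. Hence $S = J_0$ is a single closed interval, and the restriction $g \= f|_S \colon S \to S$ is a topologically exact non-degenerate smooth interval map for which~$\nu$ is an exponentially mixing acip.

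Corollary~\ref{c:equivalences} then applies to~$g$: its property~$3$ holds by hypothesis, hence all six equivalent properties hold, in particular the Exponential Shrinking of Components condition. From here the conclusions of Corollary~\ref{c:exponentially mixing acips} follow by direct quotation. By~\cite[Theorem~E and Remark~$2.10$]{RivShe1004} applied to~$g$, there is some~$p > 1$ such that the density of~$\nu$ lies in~$L^p(\Leb)$; this density vanishes off~$S$, so integration against~$\Leb$ or $\Leb|_S$ gives the same result. By~\cite[Theorem~C]{PrzRiv11} applied to~$g$, the measure~$\nu$ is obtained through a Young tower with an exponential tail estimate, and since $f = g$ on~$S$ and~$\nu$ is supported on~$S$, the same tower serves for~$f$. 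The local central limit theorem and the vector-valued almost sure invariance principle then follow from~\cite{MelNic09,You99}.

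The main obstacle is the reduction carried out in the second paragraph: one must extract from the literature the precise statement that the support of an ergodic acip of a non-degenerate smooth interval map splits into finitely many intervals cyclically permuted by~$f$ on which an iterate is topologically exact, and then verify that the restriction of~$f$ to such a component is itself a non-degenerate smooth interval map to which Corollary~\ref{c:equivalences} literally applies. Everything else is a combination of Corollary~\ref{c:equivalences} with the cited statistical and regularity results.
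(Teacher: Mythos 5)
Your argument is correct and fills in exactly the reduction that the paper leaves implicit: the paper's ``proof'' is just the assertion that the corollary follows from Corollary~\ref{c:equivalences} together with ``general properties of non-degenerate smooth interval maps,'' and your second paragraph is that reduction spelled out, after which the conclusions are direct citations as you say. One small remark: for the step identifying $\supp(\nu)$ as a single interval on which the restriction is topologically exact and non-injective, the paper itself (in the proof of Corollary~\ref{c:topological invariance of exponential mixing}) cites~\cite[Theorem~E~(2)]{vStVar04}, which delivers this in one stroke for a mixing acip and is a more on-point reference than~\cite{BruLuzvSt03,KelNow92}; your alternative route via the finite-cycle structure theorem for ergodic acip's plus the mixing-forces-$k=1$ observation is equally valid.
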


Alves, Freitas, Luzzatto, and Vaienti showed under mild assumptions
that in any dimension each polynomially mixing or stretch
exponentially mixing acip can be obtained through a Young tower with the corresponding tail estimates, see~\cite[Theorem~C]{AlvFreLuzVai11}.
In contrast with this last result, in Corollary~\ref{c:exponentially mixing acips} the existence of~$p > 1$ for which the density of~$\nu$ is in~$L^p(\Leb)$ is obtained as a consequence, and not as a hypothesis.
So the following question arises naturally.
\begin{ques}
Let~$f$ be a non-degenerate smooth interval map having an acip~$\nu$.
Does there exist~$p > 1$ such that the density of~$\nu$ with respect to~$\Leb$ is in the space~$L^p(\Leb)$?
\end{ques}

\subsection{Topological invariance}
\label{ss:topological invariance}
A direct consequence of Corollary~\ref{c:equivalences} and a result of
Nowicki and Przytycki in~\cite{NowPrz98}, is that each of the
conditions~$1$--$7$ of Corollary~\ref{c:equivalences} is invariant
under topological conjugacy for maps having all of its periodic points hyperbolic repelling.
To state this result more precisely, we recall the definition of the ``Topological Collet-Eckmann condition'' introduced in~\cite{NowPrz98}.
Let~$f \colon I \to I$ be a non-degenerate smooth map that is topologically exact and fix~$r > 0$.
Given an integer~$n \ge 1$, the \emph{criticality of~$f^n$ at a point~$x$ of~$I$} is the number of those~$j$ in~$\{0, \ldots, n - 1 \}$ such that the connected component of~$f^{-(n - j)}(B(f^n(x), r))$ containing~$f^j(x)$ contains a critical point of~$f$.
Then~$f$ satisfies the \emph{Topological Collet-Eckmann (TCE) condition}, if for some choice of~$r > 0$ there are constants~$D \ge 1$ and~$\theta$ in~$(0, 1)$, such that the following property holds: For each point~$x$ in~$I$ the set~$G_x$ of all those integers~$m \ge 1$ for which the criticality of~$f^m$ at~$x$ is less than or equal to~$D$, satisfies
$$ \liminf_{n \to + \infty} \frac{1}{n} \# \left( G_x \cap \{1, \ldots, n \} \right)
\ge
\theta. $$

One of the main features of the TCE condition, which is readily seen from its definition, is that it is invariant under topological conjugacy preserving critical points: If~$f \colon I \to I$ is a non-degenerate smooth map satisfying the TCE condition and~$\wtf \colon \tI \to \tI$ is a non-degenerate smooth map that is topologically conjugated to~$f$ by a map preserving critical points, then~$\wtf$ also satisfies the TCE condition.
Nowicki and Przytycki showed in~\cite{NowPrz98} that for a
non-degenerate smooth interval map~$f$, condition~$5$ of
Corollary~\ref{c:equivalences} implies the TCE condition.
They also proved that if in addition all the periodic points of~$f$
are hyperbolic repelling, then the TCE condition implies condition~$2$ of Corollary~\ref{c:equivalences}.
Thus, the following is a direct consequence of Corollary~\ref{c:equivalences} and~\cite{NowPrz98}.
\begin{coroalph}
\label{c:topological invariance}
For a non-degenerate smooth interval map that is topologically exact
and that only has hyperbolic repelling periodic points, the Topological Collet-Eckmann condition is equivalent to each of the conditions~$1$--$7$ of Corollary~\ref{c:equivalences}.
In particular, each of the conditions~$1$--$7$ of Corollary~\ref{c:equivalences} is invariant under topological conjugacy preserving critical points, for maps having only hyperbolic repelling periodic points.
\end{coroalph}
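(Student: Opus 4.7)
The plan is to sandwich the Topological Collet-Eckmann condition between conditions~$5$ and~$2$ of Corollary~\ref{c:equivalences} using the two implications established by Nowicki and Przytycki in~\cite{NowPrz98}, and then close the loop with Corollary~\ref{c:equivalences} itself. Concretely, \cite{NowPrz98} supplies (a)~the Exponential Shrinking of Components condition~$(5)$ implies TCE, and (b)~TCE implies Uniform Hyperbolicity on Periodic Orbits~$(2)$. Combined with the equivalence $(2)\Leftrightarrow(5)$ furnished by Corollary~\ref{c:equivalences}, this yields the chain
\[
(5) \Longrightarrow \mathrm{TCE} \Longrightarrow (2) \Longrightarrow (5),
\]
so TCE is equivalent to every one of conditions~$1$--$6$.

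For the topological invariance statement, I would first record that TCE is invariant under any topological conjugacy that sends critical points to critical points. This is essentially immediate from the definition: the criticality of~$f^n$ at~$x$ is expressed purely through pullback components of round balls and their intersection with the critical set, so a conjugating homeomorphism~$h$ transports this structure (after replacing the radius~$r$ by a smaller one controlled by the uniform continuity of~$h$ and~$h^{-1}$, using that~$I$ is compact). For non-degenerate smooth interval maps the critical points coincide with the turning points where monotonicity is lost, a property that is combinatorially detectable; hence a topological conjugacy between two such maps must carry critical points to critical points, which is the hypothesis needed for TCE invariance. Combining this with the equivalence between TCE and conditions~$1$--$6$ yields the topological invariance of each of those conditions.

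The one genuinely delicate point where I expect to take care is the verification that topological conjugacy between non-degenerate topologically exact smooth interval maps automatically preserves critical points, since Corollary~\ref{c:topological invariance} does not include this as an explicit hypothesis on the conjugacy. The argument should proceed by characterizing critical points as turning points (the finitely many points at which any neighborhood fails to be mapped homeomorphically), a property manifestly preserved by any homeomorphic conjugacy. Once this is dispatched, the proof is a short assembly: invoke Corollary~\ref{c:equivalences} for the internal equivalences, insert TCE into the cycle via the two implications from~\cite{NowPrz98}, and propagate topological invariance from TCE back to each of conditions~$1$--$6$.
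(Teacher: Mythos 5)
Your reduction of the first assertion is exactly the paper's: sandwich the TCE condition between conditions~$5$ and~$2$ of Corollary~\ref{c:equivalences} using the two implications from~\cite{NowPrz98} (Exponential Shrinking of Components implies TCE, and TCE implies Uniform Hyperbolicity on Periodic Orbits), then close the cycle $(5)\Rightarrow\mathrm{TCE}\Rightarrow(2)\Rightarrow(5)$ with Corollary~\ref{c:equivalences}. That part is correct and mirrors the paper.

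The gap is in the step you flag as delicate: the claim that for a non-degenerate smooth interval map the critical points are exactly the turning points, so that an arbitrary conjugating homeomorphism must preserve the critical set. This is false. Non-degeneracy only requires that at each critical point some higher-order derivative be non-zero; when the first non-vanishing derivative has \emph{odd} order (as for $x\mapsto x^3$ at the origin, in local coordinates), the map is a local homeomorphism there, so the critical point is \emph{not} a turning point and is topologically invisible. A conjugating homeomorphism need not carry such an inflection-type critical point to a critical point of the target map, yet such a point does enter the criticality count in the definition of the TCE condition. The paper does not rely on your claim: it states TCE invariance only for conjugacies \emph{preserving critical points}, and the ``invariant under topological conjugacy'' in Corollary~\ref{c:topological invariance} is meant with that same implicit convention. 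If you want the stronger statement for arbitrary topological conjugacies, you would need a separate argument — for instance, that the TCE condition is insensitive to inflection-type critical points, or that the criticality can equivalently be defined using turning points only — and no such argument is supplied.
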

Combining~\cite{NowPrz98} and~\cite[Theorem~A]{NowSan98}, it follows that for unicritical maps having only hyperbolic repelling periodic points the Collet-Eckmann and the Backward Collet-Eckmann conditions are both invariant under topological conjugacy preserving critical points.
This is not the case for maps with several critical points, see~\cite[Appendix~C]{PrzRivSmi03}.

The following is for maps that are not necessarily topologically
exact.
It is obtained by combining Corollary~\ref{c:topological invariance}
with general properties of non-degenerate smooth interval maps, see~\S\ref{ss:proof of corollaries} for the proof.
\begin{coroalph}
\label{c:topological invariance of exponential mixing}
For non-degenerate smooth interval maps having only hyperbolic
repelling periodic points, the property that an iterate has an exponentially mixing acip is invariant under topological conjugacy preserving critical points.
\end{coroalph}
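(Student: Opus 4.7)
The plan is to reduce to the topologically exact setting treated in Corollary~\ref{c:topological invariance} and then exploit the topological invariance of the TCE condition. Fix a non-degenerate smooth interval map $f : I \to I$ and suppose that for some integer $k \ge 1$ the iterate $f^k$ admits an exponentially mixing acip $\nu$. Let $h : I \to \tI$ be a homeomorphism conjugating $f$ to a non-degenerate smooth interval map $\wtf : \tI \to \tI$; the goal is to produce an exponentially mixing acip for some iterate of $\wtf$.

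First I would use the classical spectral decomposition for acips of smooth interval maps to extract a topologically exact subsystem. Since exponential mixing implies ergodicity, $\nu$ is $f^k$-ergodic, so its support is a finite cyclic union of closed intervals. After replacing $k$ by a suitable multiple one obtains a closed interval $J \subset I$ with $f^k(J) = J$ such that $g := f^k|_J$ is topologically exact and the normalized restriction of $\nu$ to $J$ is an exponentially mixing acip for $g$. Applying Corollary~\ref{c:topological invariance} to $g$ then yields that $g$ satisfies the TCE condition.

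Next I would transfer TCE across the conjugacy. Setting $\tJ := h(J)$ and $\tilde g := \wtf^k|_{\tJ}$, the map $h|_J$ is a topological conjugacy between the topologically exact maps $g$ and $\tilde g$. By \S\ref{ss:topological invariance}, once $h$ is verified to match the critical points of $g$ and $\tilde g$ that enter the TCE definition, TCE transfers from $g$ to $\tilde g$. A second application of Corollary~\ref{c:topological invariance}, now to $\tilde g$, produces an exponentially mixing acip $\tnu$ for $\tilde g$. Extending $\tnu$ by zero to $\tI$ yields a $\wtf^k$-invariant acip on $\tI$, and because the relevant correlation integrals localize to $\tJ$, exponential mixing is preserved.

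The main obstacle is the critical-point bookkeeping in the transfer step: the TCE condition is phrased in terms of smooth critical points, whereas a topological conjugacy only needs to preserve turning points and is blind to flat (odd-order) critical points. The way out is to observe that only turning critical points meeting the orbits relevant to TCE actually contribute, and that turning points are characterized purely topologically, hence are automatically preserved by $h$. A secondary but minor issue is the initial reduction to a topologically exact restriction, which follows from the mixing of $\nu$ together with standard structure results for ergodic acips of smooth interval maps.
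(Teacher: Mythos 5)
Your argument follows the paper's approach: both reduce to the topologically exact restriction $f^s|_J$ on the support of the exponentially mixing acip (using the structure theorem, \cite[Theorem~E($2$)]{vStVar04}, which in fact gives directly that the support is a \emph{single} interval since a mixing measure admits no cyclic decomposition), apply Corollary~\ref{c:topological invariance} to pass to TCE, transfer TCE across the conjugacy, and apply Corollary~\ref{c:topological invariance} again. The paper packages this as an equivalence with a topologically invariant condition~(*) rather than arguing the forward implication and citing symmetry, but the reasoning is the same; your remark about critical-point bookkeeping (turning points versus odd-order critical points) is a genuine subtlety that the paper also elides when asserting that~(*) is ``clearly'' invariant under topological conjugacy.
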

\subsection{High-temperature phase transitions}
\label{ss:phase transitions}
Corollary~\ref{c:equivalences} has a very useful application to the thermodynamic formalism of interval maps, that we proceed to describe.
Let~$f \colon I \to I$ be a non-degenerate smooth interval map that is topologically exact.
Denote by~$\sM(I, f)$ the space of Borel probability measures on~$I$ that are invariant by~$f$.
For a measure~$\nu$ in~$\sM(I, f)$, denote by~$h_{\nu}(f)$ the measure-theoretic entropy of~$f$ with respect to~$\nu$ and for each real number~$t$ put
$$ P(t)
\=
\sup \left\{ h_{\nu}(f) - t \chi_\nu(f) : \nu \in \sM(I, f) \right\}. $$
Combining Ruelle's inequality in~\cite{Rue78} with the fact that the Lyapunov
exponent of every measure in~$\sM(I, f)$ is nonnegative,
see~\cite[Theorem~B]{Prz93} or~Proposition~\ref{p:Lyapunov are
  nonnegative}, it follows that the number above is finite and that
the function~$P \colon \R \to \R$ so defined is convex and nonincreasing.
Moreover, $P$ has at least one zero and that its first zero is
in~$(0, 1]$.
The function~$P$ is called the \emph{geometric pressure function
  of~$f$}, and it is related to various multifractal spectra and large
deviation rate functions associated to~$f$.

Following the usual terminology in statistical mechanics, for a real number~$t_*$ we say~$f$ has a \emph{phase transition at~$t_*$}, if~$P$ is not real analytic at~$t = t_*$.
In accordance with the usual interpretation of~$t > 0$ as the inverse of the temperature in statistical mechanics, if in addition~$t_* > 0$ and~$t_*$ is less than or equal to the first zero of~$P$, then we say that~$f$ has a \emph{high-temperature phase transition}.

The following is an easy consequence of Corollary~\ref{c:equivalences}
and~\cite[Theorem~A]{PrzRiv1405}, see~\S\ref{ss:proof of corollaries}
for the proof.
\begin{coroalph}
\label{c:phase transitions}
For a non-degenerate smooth interval map~$f$ that is topologically exact, the following properties are equivalent:
\begin{enumerate}
\item[1.]
The map~$f$ has a high-temperature phase transition.
\item[2.]
If we denote by~$t_0$ the first zero of~$P$, then for every~$t \ge t_0$ we have~$P(t) = 0$.
\item[3.]
The function~$P$ is nonnegative.
\item[4.]
The map~$f$ is not Lyapunov hyperbolic.
\end{enumerate}
\end{coroalph}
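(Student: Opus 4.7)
The plan is to chain $(4)\Leftrightarrow(3)\Leftrightarrow(2)\Rightarrow(1)\Rightarrow(4)$, using the stated convexity and monotonicity of the pressure function $P$, the variational principle together with Ruelle's inequality, the identification of the TCE condition with $\chiinff>0$ furnished by Corollaries~\ref{c:equivalences} and~\ref{c:topological invariance}, and as a black box the real-analyticity results for $P$ under TCE from~\cite{PrzRiv11,PrzRivinterval}.

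The equivalence $(2)\Leftrightarrow(3)$ is purely formal: since $P$ is non-increasing with $P(t_0)=0$, one has $P\ge 0$ on $(-\infty,t_0]$ and $P\le 0$ on $[t_0,\infty)$ automatically, so $(3)$ is equivalent to $P\equiv 0$ on $[t_0,\infty)$. For $(4)\Rightarrow(3)$, observe that failure of TCE is equivalent to $\chiinff=0$: the Main Theorem gives $\chiinff=\chiperf\ge 0$ (since every hyperbolic repelling periodic orbit has positive Lyapunov exponent), and Corollary~\ref{c:equivalences} identifies TCE with $\chiinff>0$. Given $\e>0$, pick an invariant probability measure $\nu_\e$ with $\chi_{\nu_\e}(f)<\e$; the variational principle yields $P(t)\ge h_{\nu_\e}(f)-t\chi_{\nu_\e}(f)\ge -t\e$ for every $t\ge 0$, so letting $\e\to 0$ gives $P\ge 0$ on $[0,\infty)$, and monotonicity together with $P(0)=\htop(f)\ge 0$ extends this to all of $\R$. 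Conversely, for $(3)\Rightarrow(4)$, if TCE holds then $\chiinff>0$ and Ruelle's inequality $h_\nu(f)\le\chi_\nu(f)$ for every invariant $\nu$ yields
$$P(t)\le\sup_{\nu}(1-t)\chi_\nu(f)\le(1-t)\chiinff<0$$
for every $t>1$, contradicting~$(3)$.

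The implication $(2)\Rightarrow(1)$ is immediate: by definition of $t_0$ as the first zero of $P$, one has $P>0$ on $(-\infty,t_0)$, while under~$(2)$ $P\equiv 0$ on $[t_0,\infty)$; hence $P$ cannot agree with a real-analytic function on any neighborhood of $t_0$, producing a phase transition at $t_0\in(0,1]$, that is, a high-temperature one. The only remaining implication, $(1)\Rightarrow(4)$, is the step requiring genuine external input: its contrapositive, that TCE implies no high-temperature phase transition, follows from the real-analyticity results of~\cite{PrzRiv11,PrzRivinterval}, which guarantee that under TCE the pressure $P$ is real analytic on an open interval containing $(0,t_0]$, and hence has no phase transition in that range.

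The main obstacle I expect is precisely this last step, which is essentially a black-box invocation of the analyticity theory developed in~\cite{PrzRiv11,PrzRivinterval}; the subsidiary technical point is the use of the Main Theorem to upgrade $\chiinff\le 0$ to $\chiinff=0$ when TCE fails, so that the measures $\nu_\e$ producing the bound $P(t)\ge -t\e$ are actually available.
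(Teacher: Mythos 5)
Your proof is correct and follows essentially the same route as the paper's: both decompose the equivalence into the same elementary implications, both identify failure of TCE with $\chiinff = 0$ via Corollaries~\ref{c:equivalences}/\ref{c:topological invariance} and the nonnegativity of Lyapunov exponents, both combine the variational principle with Ruelle's inequality for the arguments involving condition~$3$, and both invoke the analyticity theory of~\cite{PrzRiv11,PrzRivinterval} as a black box for $1 \Rightarrow 4$. The only noteworthy stylistic difference is in $(3) \Rightarrow (4)$: you argue the contrapositive, so that under TCE all $\chi_\nu \ge \chiinff > 0$ automatically, which lets Ruelle's inequality in the form $h_\nu \le \chi_\nu$ yield $P(t) \le (1-t)\chiinff < 0$ for $t > 1$ directly; the paper instead argues forward from $P(2) \ge 0$, produces measures with arbitrarily small Lyapunov exponent, and separately quotes Przytycki's nonnegativity theorem to conclude $\chiinff = 0$. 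Your variant is marginally cleaner but uses the same ingredients.
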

When~$f$ is a complex rational map, the equivalence of conditions~$2$--$4$ is part of~\cite[Main Theorem]{PrzRivSmi03}.\footnote{It is unclear to us if condition~$1$ is equivalent to~$2$--$4$ in the complex setting.}

\subsection{Notes and references}
\label{ss:notes and references}
If the map~$f$ is not topologically exact, then by the Main Theorem we have~$\chiinff = \chiperf$, but the remaining assertions of the Main Theorem do not hold in general.
For an example, consider the logistic map with the Feigenbaum combinatorics, $f_0$.
For this map we have~$\chiinf(f_0) = 0$.
However, if~$J$ is a small closed interval that is disjoint from the post-critical set of~$f_0$, then the limit in the Main Theorem is strictly negative.
Similarly, for every point~$x_0$ that is not in the post-critical set of~$f_0$, the~$\limsup$ in the Main Theorem is strictly positive.
This also shows that the implication~$6 \Rightarrow 1$ of Corollary~\ref{c:equivalences} does not hold for~$f_0$.
Note also that an infinitely renormalizable map~$f$ cannot satisfy any of the conditions~$1$--$5$ of Corollary~\ref{c:equivalences}.

See~\cite{Mih0810} for further examples illustrating the difference
between the Collet-Eckmann condition and conditions~$1$--$7$ of Corollary~\ref{c:equivalences} for maps with at least~$2$ critical points.

Li~\cite{Li17} and Luzzatto and Wang~\cite{LuzWan06} showed that the Collet-Eckmann condition together with a slow recurrence condition is invariant under topological conjugacy preserving critical points.
See also~\cite{LiShe13} for a recent related result.

See~\cite{CorRiv13,CorRiv15b} and references therein for results on low-temperature phase transitions; that is, phase transitions that occur after the first zero of the geometric pressure function.
\subsection{Strategy and organization}
\label{ss:organization}
To prove the Main Theorem and Corollary~\ref{c:equivalences} we follow the structure of the proof of the analog result for complex rational maps in~\cite[Main Theorem]{PrzRivSmi03}.
The main difficulty is the proof that~$\chiperf > 0$ implies the last statement of the Main Theorem, which is essentially the implication~$2 \Rightarrow 5$ of Corollary~\ref{c:equivalences}.
The proof of this fact in~\cite{PrzRivSmi03} relies in an essential way on the fact that a nonconstant complex rational maps is open as a map from the Riemann sphere to itself.
The argument provided here allows us to deal with the fact that a multimodal map is not an open map in general.
Ultimately, it relies on the fact that the boundary of a bounded interval in~$\R$ is reduced to~$2$ points.

To prove implication~$2 \Rightarrow 5$ of Corollary~\ref{c:equivalences} we first remark that the proof of the implication~$2 \Rightarrow 6$ for rational maps in~\cite{PrzRivSmi03} applies without change to interval maps.
Our main technical result is a quantified version of the implication~$6 \Rightarrow 5$ for interval maps.
This is stated as Proposition~\ref{p:CE2 implies ESC}, after some preliminary considerations in~\S\ref{s:preliminaries}.
Its proof occupies all of~\S\ref{s:CE2 implies ESC}.
In~\S\ref{s:quantified expansion} we formulate a strengthened version of the Main Theorem, stated as the Main Theorem', and we deduce it from Proposition~\ref{p:CE2 implies ESC} and known results.
In the proof we use that the Lyapunov exponent of every invariant measure supported on the Julia set is nonnegative~\cite[Theorem~B]{Prz93}.
We provide a simple proof of this fact (Proposition~\ref{p:Lyapunov are nonnegative} in Appendix~\ref{s:Lyapunov are nonnegative}), which holds for a general continuously differentiable interval map.
This result is used again in the proof of Corollary~\ref{c:phase transitions}.

The proofs of Corollaries~\ref{c:equivalences}, \ref{c:topological invariance of exponential mixing}, and~\ref{c:phase transitions} are given in~\S\ref{ss:proof of corollaries}, after we prove the implication~$5 \Rightarrow 4$ of Corollary~\ref{c:equivalences} in~\S\ref{s:conjugacy to affine}.

\subsection{Acknowledgments}
\label{ss:acknowledgments}
I would like to thank the referee for several valuable comments.

This article was completed while the author was visiting Brown University and the Institute for Computational and Experimental Research in Mathematics (ICERM).
The author thanks both of these institutions for the optimal working
conditions provided, and acknowledges partial support from FONDECYT grant 1100922, Chile, and NSF grant DMS-1700291, U.S.A.

\section{Preliminaries}
\label{s:preliminaries}
Throughout the rest of this paper~$I$ denotes a compact interval of~$\R$.
We endow~$I$ with the distance~$\dist$ induced by the absolute value~$| \cdot |$ on~$\R$.
For~$x$ in~$I$ and~$r > 0$, we denote by~$B(x, r)$ the open ball of~$I$ centered at~$x$ and of radius~$r$.
For an interval~$J$ contained in~$I$, we denote by~$|J|$ its length and for~$\eta > 0$ we denote by~$\eta J$ the open interval of~$\R$ of length~$\eta |J|$ that has the same middle point as~$J$.

Given a map~$f \colon I \to I$, a subset~$J$ of~$I$ is \emph{forward invariant} if~$f(J) = J$ and it is \emph{completely invariant} if~$f^{-1}(J) = J$.
\subsection{Fatou and Julia sets}
\label{ss:multimodal}
Following~\cite{dMevSt93}, in this section we introduce the Fatou and Julia sets of a multimodal map and gather some of their basic properties.

A non-injective continuous map $f \colon I \to I$ is \emph{multimodal}, if there is a finite partition of~$I$ into intervals on each of which~$f$ is injective.
A \emph{turning point} of a multimodal map~$f \colon I \to I$ is a point in~$I$ at which~$f$ is not locally injective.

Fix a multimodal map~$f \colon I \to I$.
The \emph{Fatou set~$F(f)$ of~$f$} is the largest open subset of~$I$ on which the iterates of~$f$ form a normal family.
A connected component of~$F(f)$ is called \emph{Fatou component of~$f$}.
A Fatou component~$U$ of~$f$ is \emph{periodic} if for some integer~$p \ge 1$ we have~$f^p(U) \subset U$.
In this case the least integer~$p$ with this property is the \emph{period of~$U$}.

The \emph{Julia set~$J(f)$ of~$f$} is the complement of~$F(f)$ in~$I$.
By definition we have~$f^{-1}(F(f)) \subset F(f)$ and
therefore~$f(J(f)) \subset J(f)$.
In contrast with the complex setting, the Julia set of~$f$  might be empty, reduced to a single point, or might not be
completely invariant.
If the Julia set of~$f$ is not completely invariant, then
it is possible to make an arbitrarily small smooth perturbation of~$f$
outside a neighborhood of~$J(f)$, so that the Julia set of the
perturbed map is completely invariant and coincides with~$J(f)$.

\subsection{Topological exactness}
\label{ss:topological exactness}
Fix a multimodal map~$f \colon I \to I$.
We say that~$f$ is \emph{boundary anchored} if~$f(\partial I)
\subset \partial I$ and that~$f$ is \emph{topologically exact
  on~$J(f)$}, if~$J(f)$ is not reduced to a point and if for every open
subset~$U$ of~$I$ intersecting~$J(f)$ an iterate of~$f|_{J(f)}$
maps~$U \cap J(f)$ onto~$J(f)$.

Since it is too restrictive for our applications to assume that a multimodal map is at the same time boundary anchored and topologically exact on its Julia set, we introduce the following terminology.
We say that a multimodal map~$f$ is \emph{essentially topologically exact on~$J(f)$}, if there is a compact interval~$I_0$ contained in~$I$ that contains all the critical points of~$f$ and such that the following properties hold: $f(I_0) \subset I_0$, the multimodal map~$f|_{I_0} \colon I_0 \to I_0$ is topologically exact on~$J(f|_{I_0})$, and~$\bigcup_{n = 0}^{+ \infty} f^{-n}(I_0)$ contains an interval whose closure contains~$J(f)$.

\subsection{Differentiable interval maps}
\label{ss:differentiable}
Fix a differentiable map~$f \colon I \to I$.

A \emph{critical point of~$f$} is a point at which the derivative of~$f$ vanishes.
A \emph{critical value of~$f$} is the image by~$f$ of a critical point. 
We denote by~$\Crit(f)$ the set of critical points of~$f$.
If~$f$ is in addition a multimodal map, then we put
$$ \Crit'(f) \= \Crit(f) \cap J(f). $$

Let~$J$ be an interval contained in~$I$ and let~$n \ge 1$ be an integer.
Then each connected component of~$f^{-n}(J)$ is a \emph{pull-back of~$J$ of order~$n$}, or just a \emph{pull-back of~$J$}.
If in addition~$f^n \colon W \to J$ is a diffeomorphism, then the pull-back~$W$ is \emph{diffeomorphic}.
Note that if~$f$ is boundary anchored and~$W$ is a pull-back of~$J$ of order~$n$, then~$f^n(\partial W) \subset \partial J$.

Let~$J$ be an interval contained in~$I$, let~$n \ge 1$ be an integer, and let~$W$ be a pull-back of~$J$ by~$f^n$.
We say~$W$ is a \emph{child of~$J$},\footnote{This definition is a variant of the usual definition of ``child.'' It is adapted to deal with the case where~$f$ has a critical connection.} if~$W$ contains a unique critical point~$c$ of~$f$ in~$J(f)$ and if there is~$s$ in~$\{0, \ldots, n - 1 \}$ such that~$f^s(c)$ belongs to~$\Crit(f)$ and such that the following properties hold:
\begin{enumerate}
\item[1.]
Either~$s = n - 1$ or the pull-back of~$J$ by~$f^{n - s - 1}$ containing~$f^{s + 1}(c)$ is diffeomorphic.
\item[2.]
For each~$s'$ in~$\{0, \ldots, s \}$ the pull-back of~$J$ by~$f^{n - s'}$ containing~$f^{s'}(c)$ is either disjoint from~$\Crit(f)$ or~$f^{s'}(c)$ belongs to~$\Crit(f)$ and then~$f^{s'}(c)$ is the unique critical point of~$f$ contained in this set.
\end{enumerate}

\subsection{Interval maps of class~$C^3$ with non-flat critical points}
A differentiable interval map~$f \colon I \to I$ is \emph{of class~$C^3$
  with non-flat critical points}, if:
\begin{itemize}
\item
The set~$\Crit(f)$ is finite and~$f$ is of class~$C^3$ outside $\Crit(f)$.
\item
For each critical point~$c$ of~$f$ there exists a number $\ell_c>1$ and diffeomorphisms~$\phi$ and~$\psi$ of~$\R$ of class~$C^3$, such that $\phi(c)=\psi(f(c))=0$ and such that on a neighborhood of~$c$ on~$I$ we have,
$$ |\psi\circ f| = |\phi|^{\ell_c}. $$
The number~$\ell_c$ is the \emph{order of~$f$ at~$c$}.
\end{itemize}

Denote by~$\sA$ the collection of non-injective interval maps of
class~$C^3$ with non-flat critical points, whose Julia set is
completely invariant and contains at least~$2$ points.
Note that every smooth non-degenerate interval map that is
topologically exact is in~$\sA$, and that every interval map in~$\sA$ is a continuously differentiable multimodal map.

We use the following important fact: For each map in~$\sA$ every Fatou
component is mapped to a periodic Fatou component under forward
iteration, and the number of periodic Fatou components is finite,
see~\cite[Chapter~IV, Theorem~AB]{dMevSt93}.

The following version of the Koebe principle follows from~\cite[Theorem~C($2$)(ii)]{vStVar04}.
As for non-degenerate smooth interval maps, a periodic point~$p$ of period~$n$ of a map~$f$ in~$\sA$ is \emph{hyperbolic repelling} if~$|Df^n(p)| > 1$.
\begin{lemm}[Koebe principle]
\label{l:Koebe principle}
Let~$f \colon I \to I$ be an interval map in~$\sA$ all whose periodic points in~$J(f)$ are hyperbolic repelling.
Then there is~$\delta_0 > 0$ such that for every~$K > 1$ there is~$\varepsilon$ in~$(0, 1)$ such that the following property holds.
Let~$J$ be an interval contained in~$I$ that intersects~$J(f)$ and satisfies~$|J| \le \delta_0$.
Moreover, let~$n \ge 1$ be an integer and $W$ a diffeomorphic pull-back of~$J$ by~$f^n$.
Then for every~$x$ and~$x'$ in the unique pull-back of~$\varepsilon J$ by~$f^n$ contained in~$W$ we have
$$ K^{-1} \le |Df^n(x)| / |Df^n(x')| \le K. $$
\end{lemm}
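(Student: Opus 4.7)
The plan is to deduce this statement directly from the Koebe principle of van Strien and Vargas (\cite{vStVar04}, Theorem~C(2)(ii)), so the work is essentially to verify that their hypotheses are satisfied in our dynamical setting. First I would fix the class: $f \in \sA$ is $C^3$ with non-flat critical points and has completely invariant Julia set, so by \cite{MardMevSt92} every Fatou component of $f$ is pre-periodic and there are only finitely many periodic Fatou components. The assumption that every periodic point in $J(f)$ is hyperbolic repelling ensures that no periodic Fatou component accumulates on $J(f)$ in a parabolic fashion (a neutral periodic point on the boundary would lie in $J(f)$ and fail hyperbolicity); this is precisely the condition that lets one obtain distortion bounds for diffeomorphic pull-backs with uniform constants.

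Next I would choose $\delta_0 > 0$ small enough in terms of the global geometry of $f$ so that every interval $J \subseteq I$ with $J \cap J(f) \neq \emptyset$ and $|J| \le \delta_0$ lies inside the region where the van Strien--Vargas estimate applies. Concretely, $\delta_0$ should be smaller than the distance from $J(f)$ to the union of those periodic Fatou components whose closures are disjoint from $J(f)$, and small with respect to the diameter of $I$, so that we may freely use comparability between Euclidean length and the intrinsic ``scale'' of~\cite{vStVar04} on intervals touching $J(f)$.

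Given $K > 1$, I would then invoke \cite{vStVar04}, Theorem~C(2)(ii) to obtain $\varepsilon = \varepsilon(K) \in (0,1)$ such that for any diffeomorphic pull-back $W$ of such an interval $J$ under $f^n$, the map $f^n$ restricted to the pull-back of $\varepsilon J$ contained in $W$ has multiplicative distortion at most $K$; the two-point comparison stated in the lemma is an immediate reformulation of this distortion bound.

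The main obstacle will be translating the hypotheses of \cite{vStVar04} into the present framework. Their result is formulated for intervals relative to the dynamics on the ``dynamically significant'' part of $I$, and one must check that no sequence of diffeomorphic pull-backs of $J$ degenerates near a non-repelling periodic orbit or a critical point escaping through a non-diffeomorphic branch. The restriction to $J$ intersecting $J(f)$, together with the hyperbolic repelling assumption on periodic points in $J(f)$ and the fact that pull-backs under consideration are required to be diffeomorphic, is what allows this verification to go through uniformly; after this is in place the conclusion is essentially a direct quotation of the cited theorem.
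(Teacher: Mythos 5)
Your proposal takes the same approach as the paper: the paper offers no proof of this lemma beyond the single sentence ``follows from~\cite[Theorem~C (2)(ii)]{vStVar04},'' and your argument is likewise a direct quotation of that reference together with a verification that its hypotheses (in particular the absence of neutral periodic points, guaranteed by the hyperbolic-repelling assumption on $J(f)$) are met. Your elaboration of how $\delta_0$ should be chosen goes beyond what the paper records, but it is consistent with the intent of the citation.
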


The following general fact is used in the proof of the Main Theorem' in~\S\ref{s:quantified expansion}.
\begin{fact}
\label{f:positive entropy}
If~$f$ is an interval map in~$\sA$ that is topologically exact on~$J(f)$, then~$J(f)$ contains a uniformly expanding set whose topological entropy is strictly positive.
In particular, the Hausdorff dimension of~$J(f)$ is strictly positive.
\end{fact}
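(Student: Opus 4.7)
The plan is to construct a compact $f$-invariant subset $K \subset J(f)$ that is disjoint from $\Crit(f)$ and carries positive topological entropy, and then to invoke Ma{\~n}{\'e}'s hyperbolicity theorem to upgrade $K$ to a uniformly expanding set. First I observe that for any open subinterval $U$ of $I$ meeting $J(f)$, topological exactness gives $f^n(U) = J(f)$ for some $n \ge 1$; since $U$ is connected, so is its image, forcing $J(f)$ to be a non-trivial compact interval. Using the complete invariance of $J(f)$ for $f \in \sA$, the restriction $f|_{J(f)}$ is itself a topologically exact continuous self-map of this interval. A classical theorem in one-dimensional dynamics (Block--Coven / Misiurewicz) then yields $\htop(f|_{J(f)}) > 0$ and produces a topological horseshoe: disjoint compact subintervals $J_1, J_2 \subset J(f)$ and an integer $n \ge 1$ with $f^n(J_i) \supset J_1 \cup J_2$ for $i = 1, 2$. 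The maximal $f^n$-invariant subset $K \= \bigcap_{k \ge 0} f^{-kn}(J_1 \cup J_2)$ then satisfies $\htop(f|_K) \ge (\log 2)/n > 0$.

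Next, I would arrange $\overline{J_1} \cup \overline{J_2}$ to be disjoint from the finite set $\Crit(f)$. Fix a small open subinterval $U$ of $J(f) \setminus \Crit(f)$; by topological exactness, $f^N(U) = J(f)$ for some $N \ge 1$, and since $J(f)$ strictly contains $U$ the map $f^N|_U$ is not injective. By an intermediate-value argument on the monotonicity branches of $f^N|_U$, and possibly after passing to a higher iterate $f^{kN}$, I can extract from $U$ two disjoint subintervals $J_1', J_2'$ each of which is mapped by $f^{kN}$ onto an interval containing $\overline{U}$; this yields a horseshoe strictly inside $U$, so $K \cap \Crit(f) = \emptyset$. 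Ma{\~n}{\'e}'s hyperbolicity theorem for $C^2$ interval maps with non-flat critical points then applies: every compact forward-invariant subset of $I$ disjoint from $\Crit(f)$ and from all non-repelling periodic orbits is uniformly expanding. For maps in $\sA$ every periodic attractor lies in the Fatou set $F(f)$ by standard theory, and since $K \subset J(f) = I \setminus F(f)$, $K$ contains no such orbit. Hence $K$ is uniformly expanding.

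The positive Hausdorff dimension of $K$ follows from uniform expansion combined with positive topological entropy, via the Bowen-type bound $\HD(K) \ge \htop(f|_K)/\log \Lambda$ with $\Lambda \= \sup_K |Df^n|^{1/n}$, or directly from the self-similar Cantor-set structure of the horseshoe with uniformly bounded contraction ratios. Since $K \subset J(f)$, this also yields $\HD(J(f)) > 0$, proving the last assertion. I expect the main obstacle to be the second step, namely constructing the horseshoe away from $\Crit(f)$ while preserving its Markov structure: having many monotonicity branches of $f^N|_U$ does not immediately produce two of them each covering $\overline{U}$, so one must pass to a higher iterate and choose branches carefully. The use of Ma{\~n}{\'e}'s theorem in the final step is the essential ingredient converting a purely combinatorial horseshoe into a uniformly expanding set, and it is precisely here that the $C^3$ smoothness and non-flatness of the critical points of $f$ are used.
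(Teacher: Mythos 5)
Your strategy --- a horseshoe inside $J(f)$ away from $\Crit(f)$, then Ma\~n\'e's hyperbolicity theorem to upgrade it to uniform expansion --- is the natural one and does work; the paper states this Fact without proof, so there is nothing to compare against line by line. But your opening claim that topological exactness on $J(f)$ forces $J(f)$ to be an interval is wrong, and it is load-bearing. You read ``mapped onto $J(f)$'' as the equality $f^n(U) = J(f)$ and deduce connectedness from the connectedness of $U$; however, the paper explicitly contemplates maps that are topologically exact on a disconnected $J(f)$ --- see the clause ``if $J(f)$ is not an interval, then assume\dots'' in Proposition~\ref{p:CE2 implies ESC} and in the Main Theorem', and Remark~\ref{r:disconnected ESC}, which points to~\cite{Riv1206} for a concrete such example. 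So ``onto'' must be read as $f^n(U) \supset J(f)$, your inference fails, and when $J(f)$ is a Cantor set it has empty interior: there is no ``small open subinterval of $J(f) \setminus \Crit(f)$'', the Block--Coven/Misiurewicz theorem for interval maps does not apply to $f|_{J(f)}$, and the inclusion $K \subset J(f)$ --- which you also use to verify Ma\~n\'e's hypotheses --- is not established.

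The repair is to build the horseshoe directly from topological exactness without any interval reduction. Since $J(f)$ is perfect, one can pick six points $p_1 < \cdots < p_6$ of $J(f)$ inside a small interval $U$ disjoint from $\Crit(f)$ and from the finitely many non-repelling periodic orbits, and set $K_1 = [p_1, p_3]$, $K_2 = [p_4, p_6]$. Topological exactness applied to the open intervals $(p_1, p_3)$ and $(p_4, p_6)$ gives integers $n_i$ with $f^{n_i}(K_i) \supset J(f) \ni p_1, p_6$; since each image is an interval it contains $[p_1, p_6] \supset K_1 \cup K_2$, and a common iterate $N$ yields the horseshoe. The maximal $f^N$-invariant set $K \subset K_1 \cup K_2$ then avoids $\Crit(f)$ and, because its $f^N$-orbit accumulates only on $K_1 \cup K_2$, also avoids every non-repelling periodic orbit and its basin; Ma\~n\'e's theorem applies, and $K \subset J(f)$ follows a posteriori from the density of (now necessarily repelling) periodic points in $K$. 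Note also that even granting $K \subset J(f)$, your appeal to ``every periodic attractor lies in $F(f)$'' does not verify Ma\~n\'e's hypotheses: parabolic periodic points are non-repelling but typically lie in $J(f)$, so they must be excluded by the choice of $U$, not by the inclusion $K \subset J(f)$.
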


The following lemma is standard, see for example~\cite{Riv1206} for part~1.
\begin{lemm}
  \label{l:pull-backs}
Let~$f \colon I \to I$ be a multimodal map in~$\sA$ having all of its periodic points in~$J(f)$ hyperbolic repelling.
Then the following properties hold.
\begin{enumerate}
\item[1.]
For every integer~$n \ge 1$, every pull-back~$W$ of~$B(x, \delta_1)$ by~$f^n$ intersects~$J(f)$, contains at most~$1$ critical point of~$f$, and is disjoint from~$(\Crit(f) \cup \partial I) \setminus J(f)$.
\item[2.]
  For every~$\kappa > 0$ there is~$\delta_2 > 0$ such that for every~$x$ in~$J(f)$, every integer~$n \ge 1$, and every
  pull-back~$W$ of~$B(x, \delta_2)$ by~$f^n$, we have~$|W| \le \kappa$.
\end{enumerate}
\end{lemm}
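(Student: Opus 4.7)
The plan is to follow the strategy of the pull-back lemmas in~\cite[Appendix~A]{Riv1206}, which the author cites.

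\emph{For part~1.} I set $A \= (\Crit(f) \cup \partial I) \setminus J(f)$, a finite subset of the Fatou set~$F(f)$. Since~$f \in \sA$, every Fatou component is eventually periodic and there are only finitely many periodic Fatou components. The assumption that every periodic point in~$J(f)$ is hyperbolic repelling excludes parabolic cycles on~$\partial F(f)$; hence every periodic Fatou component is the immediate basin of an attracting periodic orbit contained in~$F(f)$, and $\overline{\mathcal{O}^+(A)}$ is a compact subset of~$F(f)$. Setting $\eta \= \dist(\overline{\mathcal{O}^+(A)}, J(f)) > 0$, I take $\delta_1 < \eta/2$, also small enough that every connected component of $f^{-1}(B(y, \delta_1))$, for any $y \in I$, contains at most one critical point of~$f$ (possible since $\Crit(f)$ is finite and the critical points are non-degenerate). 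For any pull-back~$W$ of $B(x, \delta_1)$ by $f^n$ with $x \in J(f)$, the disjointness $W \cap A = \emptyset$ follows since $a \in W \cap A$ would give $f^n(a) \in \mathcal{O}^+(A) \cap B(x, \delta_1) = \emptyset$. The ``at most one critical point of~$f$'' bound is then obtained by induction on~$n$, using that a pull-back of order~$n$ is a connected component of~$f^{-1}$ of a pull-back of order~$n - 1$. Finally, $W \cap J(f) \ne \emptyset$ comes from a boundary analysis: each endpoint of~$\overline W$ interior to~$I$ maps under~$f^n$ into $\partial B(x, \delta_1)$, and either the intermediate value theorem places $x \in f^n(W)$ (yielding a preimage of~$x$ in $W \cap J(f)$ by complete invariance of~$J(f)$), or else $f^n$ attains an extremum at an interior critical point~$c$ of~$f^n$ in~$\overline W$, in which case $f^j(c) \in \Crit(f)$ for some $j < n$, and the established disjointness $W \cap A = \emptyset$ forces $f^j(c) \in \Crit(f) \cap J(f)$, so $c \in J(f)$ by complete invariance.

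\emph{For part~2.} I argue by contradiction. Fix $\kappa > 0$ and assume no such $\delta_2$ exists, producing sequences $x_m \in J(f)$, integers $n_m \ge 1$, and pull-backs $W_m$ of $B(x_m, 1/m)$ by $f^{n_m}$ with $|W_m| > \kappa$. Uniform continuity of each individual $f^n$ forces $n_m \to \infty$. Passing to subsequences, $x_m \to x_* \in J(f)$ and $W_m$ converges in Hausdorff distance to a closed interval~$W_*$ of length at least $\kappa$ meeting~$J(f)$ (by part~1). Since $\bigcup_{k \ge 0} f^{-k}(I_0)$ has closure containing~$J(f)$, some open subinterval $V$ compactly contained in~$W_*$ and some integer $k$ satisfy $f^k(V) \subset I_0$ and $f^k(V) \cap J(f|_{I_0}) \ne \emptyset$. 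Topological exactness of $f|_{I_0}$ on $J(f|_{I_0})$ then gives $\ell \ge 0$ with $J(f|_{I_0}) \subset f^{k + \ell}(V)$, and $J(f|_{I_0})$ has positive diameter since it is not reduced to a point. For $m$ large enough that $V \subset W_m$ and $n_m \ge k + \ell$, the surjectivity of $f|_{I_0}$ on $J(f|_{I_0})$ yields $J(f|_{I_0}) \subset f^{n_m}(V) \subset B(x_m, 1/m)$, contradicting the positivity of $\diam(J(f|_{I_0}))$ as $m \to \infty$.

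The main technical hurdle in part~1 is the ``at most one critical point of~$f$'' bound uniform over~$n$, which requires the inductive pull-back structure together with the separation from~$A$ and the finiteness of~$\Crit(f)$. In part~2, the key difficulty is the transfer step carrying the positive-length limit interval~$W_*$ into~$I_0$ so that it meets~$J(f|_{I_0})$, after which topological exactness of~$f|_{I_0}$ provides the contradiction.
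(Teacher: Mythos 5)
The paper does not actually prove Lemma~\ref{l:pull-backs}: it cites it as standard and refers to~\cite[Lemmas~A.2 and~A.3]{Riv1206}, so there is no proof in this paper to compare against. Judged on its own merits, your proposal has the right overall architecture (disjointness from the finite set $A$ via the attracting-basin structure of~$F(f)$, boundary analysis for $W \cap J(f) \neq \emptyset$, compactness plus topological exactness for part~2), and the disjointness argument and the IVT/critical-point dichotomy for $W \cap J(f) \neq \emptyset$ are essentially sound. However, there is a genuine gap in the ``at most one critical point'' step of part~1.

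You assert that the bound follows ``by induction on $n$, using that a pull-back of order $n$ is a connected component of $f^{-1}$ of a pull-back of order $n-1$,'' having fixed $\delta_1$ so that order-one pull-backs of $\delta_1$-balls contain at most one critical point. This induction does not close: the order-$(n-1)$ pull-back $W'$ is a connected component of $f^{-(n-1)}(B(x,\delta_1))$, not a ball of radius~$\delta_1$, and \emph{a priori} it can be long. A connected component of $f^{-1}(W')$ can perfectly well contain two critical points $c_1 < c_2$ of~$f$ even when $W'$ itself contains at most one, since $f(c_1)$ and $f(c_2)$ need not be critical. Nothing in your setup prevents $W'$ from having length much larger than $\delta_1$, so the smallness that justified the $n=1$ case is lost. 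What is actually needed is a uniform diameter bound on all pull-backs of small balls centered on~$J(f)$---a statement in the spirit of part~2, but valid without the essential-topological-exactness hypothesis, resting instead on the non-flatness of critical points, the absence of wandering intervals for maps in~$\sA$, and the hypothesis that all periodic points in~$J(f)$ are hyperbolic repelling (so there are no parabolic obstructions). Once pull-backs of $B(x,\delta_1)$ are known to have diameter below the minimal spacing of~$\Crit(f)$, the ``at most one critical point'' conclusion is immediate, without any induction. You should either prove this diameter control first (and then part~1 is a corollary) or cite the relevant real-bounds result from~\cite{vStVar04} or~\cite{dMevSt93}.

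A smaller point in part~2: after extracting the Hausdorff limit $W_*$ of length~$\ge \kappa$ meeting~$J(f)$, you assert the existence of an open subinterval $V \subset W_*$ with $f^k(V) \subset I_0$ and $f^k(V) \cap J(f|_{I_0}) \neq \emptyset$. The definition of ``essentially topologically exact'' only gives an interval $I_1 \subset \bigcup_{n\ge 0} f^{-n}(I_0)$ with $J(f) \subset \overline{I_1}$; it does not immediately hand you a subinterval of $W_*$ that both lands in $I_0$ after finitely many steps \emph{and} meets $J(f|_{I_0})$ there. You need a short Baire-category or compactness argument to locate such a $V$, and you should verify that $f^k(V)$ actually meets $J(f|_{I_0})$ (for instance by choosing $V$ so that it straddles a point of $J(f)$ interior to $W_*$, and using forward invariance of $J(f)$ together with $J(f) \cap I_0 \subset J(f|_{I_0})$). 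The contradiction you derive once this is in place is correct.
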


\section{Exponential shrinking of components}
\label{s:CE2 implies ESC}
The purpose of this section is to prove the following proposition.
It is the key step in the proof of the Main Theorem, which is given in the next section.

\begin{prop}
\label{p:CE2 implies ESC}
Let~$f \colon I \to I$ be a map in~$\sA$ that is topologically exact on~$J(f)$.
Suppose there is a point~$x_0$ of~$J(f)$ and constants~$C > 0$ and~$\lambda > 1$ such that for every integer~$n \ge 1$ and every point~$x$ in~$f^{-n}(x_0)$ we have
$$ |Df^n(x)| \ge C \lambda^n. $$
Then every periodic point of~$f$ in~$J(f)$ is hyperbolic repelling and for every~$\lambda_0$ in~$(1, \lambda)$ there is a constant~$\delta_2 > 0$ such that the following property holds.
Let~$J$ be an interval contained in~$I$ that intersects~$J(f)$ and satisfies~$|J| \le \delta_2$.
If~$J(f)$ is not an interval, then assume that~$J$ is not a neighborhood of a periodic point in the boundary of a Fatou component of~$f$.\footnote{There is an example showing that this hypothesis is necessary, see~\cite[Proposition~A]{Riv1206}. However, a qualitative result holds when this hypothesis is not satisfied, see~\cite[Theorem~B]{Riv1206}.}
Then for every integer~$n \ge 1$ and every pull-back~$W$ of~$J$ by~$f^n$, we have
\begin{equation}
  \label{e:weak ESC}
|W| \le \lambda_0^{- n}.
\end{equation}
\end{prop}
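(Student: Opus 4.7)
The plan is to follow the three-stage structure of the proof for complex rational maps in~\cite{PrzRivSmi03}, adapted to handle the non-openness of interval maps. Stage~1: show that every periodic point of~$f$ in~$J(f)$ is hyperbolic repelling. Stage~2: upgrade the Backward Collet--Eckmann hypothesis at the single point~$x_0$ to a uniform estimate at every point of~$J(f)$, at the cost of a slightly weaker exponent. Stage~3: convert this uniform estimate into the exponential shrinking bound~\eqref{e:weak ESC} by a pull-back analysis combining the Koebe principle with the child decomposition.

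For Stage~1, suppose for contradiction that $p\in J(f)$ is periodic of period~$n$ with $|Df^n(p)|\le 1$. Topological exactness of~$f$ on~$J(f)$ provides, for each $\varepsilon>0$ and arbitrarily large~$N$, a preimage $y\in B(p,\varepsilon)$ of~$x_0$ under~$f^N$, so the hypothesis forces $|Df^N(y)|\ge C\lambda^N$. On the other hand, the one-dimensional theory of maps in~$\sA$ (no wandering intervals, Singer-type classification) forces a non-repelling periodic point to either attract a critical orbit (placing it in~$F(f)$ and contradicting $p\in J(f)$) or to sit on the boundary of a Fatou component with parabolic-like local behavior. In the latter case, a local analysis near~$p$ using the non-flatness of critical points shows that the derivative of~$f^N$ at nearby points cannot grow at rate~$\lambda^N$, giving a contradiction.

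For Stage~2, fix $r_0>0$ small enough for Lemmas~\ref{l:Koebe principle} and~\ref{l:pull-backs} to apply to balls around points of~$J(f)$. Given $y\in B(x_0,r_0)\cap J(f)$, $n\ge 1$ and $z\in f^{-n}(y)$, I would examine the pull-back~$W$ of~$B(x_0,2r_0)$ by~$f^n$ containing~$z$. If~$W$ is diffeomorphic, Koebe relates $|Df^n(z)|$ to $|Df^n(\zeta)|$ for any preimage $\zeta\in W$ of~$x_0$ (which exists by density of preimages of~$x_0$ in~$J(f)$, provided by topological exactness), and the hypothesis bounds the latter below by $C\lambda^n$. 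If~$W$ meets a critical point, decompose it using the child structure of Section~\ref{ss:differentiable}, applying Koebe on the diffeomorphic segments and the non-flat power law $|A|\asymp |f(A)|^{1/\ell_c}$ at each critical encounter; because there are only finitely many critical points, the multiplicative loss per encounter is uniformly bounded and absorbable. This yields constants $C'>0$ and $\lambda'\in(1,\lambda)$ with $|Df^n(z)|\ge C'(\lambda')^n$ uniformly in such $y$ and~$z$.

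For Stage~3, given an interval~$J$ of length at most~$\delta_3$ meeting~$J(f)$ (and not a neighborhood of a periodic boundary point of a Fatou component), and a pull-back $W=W_0$ of $J=W_n$ by $f^n$, track the pull-backs $W_j$ of~$J$ by~$f^{n-j}$ containing~$f^j(W_0)$ and flag the indices $j_1<\cdots<j_k$ at which $W_{j_i}$ contains a critical point. On each block $[j_i+1,j_{i+1}]$ the map $f^{j_{i+1}-j_i-1}$ is a diffeomorphism on the relevant pull-back, and the uniform BCE from Stage~2 together with Koebe gives a geometric decay factor $(\lambda')^{-(j_{i+1}-j_i-1)}$ for the ratio $|W_{j_i+1}|/|W_{j_{i+1}}|$; at each critical encounter, non-flatness gives $|W_{j_i}|\lesssim |W_{j_i+1}|^{1/\ell_c}$. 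Composing these estimates yields $|W_0|\le\lambda_0^{-n}$ for any prescribed $\lambda_0\in(1,\lambda)$, provided $\delta_3$ is chosen small enough. The principal obstacle, and the main departure from~\cite{PrzRivSmi03}, is the non-openness of interval maps: $f(W_j)$ may be a proper subinterval of~$W_{j+1}$, which necessitates the modified child definition of Section~\ref{ss:differentiable}; the exclusion of~$J$ near periodic Fatou-boundary points is needed because pull-backs there can retain definite size due to the slow near-parabolic dynamics.
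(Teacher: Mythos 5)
Your Stage~2 contains a genuine gap that the rest of the argument cannot recover from. You claim to upgrade the Backward Collet--Eckmann hypothesis at the single point~$x_0$ to a uniform lower bound $|Df^n(z)|\ge C'(\lambda')^n$ for \emph{every} preimage $z$ of \emph{every} $y$ near~$x_0$, absorbing critical encounters by Koebe plus the power law $|A|\asymp |f(A)|^{1/\ell_c}$, and you assert that ``the multiplicative loss per encounter is uniformly bounded.'' This is false: the derivative of~$f$ vanishes at each critical point, so $|Df^n(z)|$ can be made arbitrarily small by letting some iterate $f^j(z)$ approach a critical point, no matter how well-behaved the boundary of the containing pull-back is. The power law controls \emph{interval lengths} across a critical passage, not the pointwise derivative at an arbitrary interior point. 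In fact the conclusion you want in Stage~2 is not merely unproved but simply untrue: by the Main Theorem' the Backward Collet--Eckmann condition fails precisely on an exceptional set $E$ of zero Hausdorff dimension, which can be nonempty, so a uniform pointwise BCE at all nearby points cannot follow from BCE at one point. Since Stage~3 feeds directly off the (false) uniform BCE of Stage~2, the whole argument collapses.

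The idea the paper actually exploits, and that you are missing, is the one advertised in the introduction: the boundary of a bounded interval in~$\R$ consists of just two points. Rather than trying to control $|Df^n(z)|$ at arbitrary $z$, the paper (via Lemma~\ref{l:child} and especially Lemma~\ref{l:maximum principle}) bounds the \emph{length} of each pull-back $\hW$ of $\hJ$ by a quantity involving only $\max\{|Df^n(a)| : a\in\partial\hW\}$, the derivative at a boundary point, up to a subexponential factor $\eta^n$ and a counting estimate on the number of sub-pull-backs. One then chooses, for each $y\in J(f)$, an interval $J_y$ whose endpoints are preimages of~$x_0$ (obtained by topological exactness); then the boundary points of every pull-back of~$J_y$ are preimages of~$x_0$, and the hypothesis applies directly to those boundary points, with no need for a pointwise transfer argument through critical regions. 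Periodic points on the boundary of long Fatou components require a further countable decomposition and summation, another case your sketch does not address. Finally, for Stage~1 the paper uses the cleaner Lemma~\ref{l:periodic bound}: an inverse-branch construction near any periodic $p$ in $J(f)$ shows $\limsup_n \tfrac1n\ln\min\{|Df^n(x)|:x\in f^{-n}(x_0)\}\le \chi_p(f)$, which combined with the hypothesis immediately forces every periodic point in $J(f)$ to be hyperbolic repelling with exponent $\ge\ln\lambda$; your Singer-type case analysis is considerably vaguer and not needed.
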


The proof of this proposition is at the end of this section.
It is based on several lemmas.

In this section, a critical point~$c$ of a map~$f$ in~$\sA$ is \emph{exposed}, if for every integer~$j \ge 1$ the point~$f^j(c)$ is not a critical point of~$f$.
Given~$c$ in~$\Crit'(f)$, let~$s \ge 0$ be the largest integer such that~$f^s(c)$ is in~$\Crit(f)$ and put
$$ \widehat{\ell_c} \= \prod_{\substack{j \in \{0, \ldots, s \} \\ f^j(c) \in \Crit(f)}} \ell_{f^j(c)}
\text{ and }
\hellmax \= \max \left\{ \widehat{\ell_c} : c \in \Crit'(f) \right\}. $$

\begin{lemm}
\label{l:child}
Let~$f \colon I \to I$ be an interval map in~$\sA$ such that all of its periodic points in~$J(f)$ are hyperbolic repelling.
Then there are~$\delta_3 > 0$ and $C_1 > 1$ such that for every interval~$J$ that intersects~$J(f)$ and satisfies~$|J| \le \delta_3$ and~$C_1 J \subset I$, the following property holds: For every integer~$n \ge 1$ and every pull-back~$W$ of~$J$ by~$f^n$ such that the pull-back of~$C_1 J$ by~$f^n$ containing~$W$ is a child of~$C_1 J$, we have
$$ |W|
\le
6 \hellmax |J| \max \left\{ |Df^n(a)| : a \in \partial W \right\}^{-1}. $$
\end{lemm}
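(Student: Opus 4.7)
The plan is to decompose $f^n = f^{n-s-1} \circ f^{s+1}$ along the critical chain of the child $W' \supset W$ of $C_1 J$. Let $c_0$ be the unique critical point of $f$ in $W' \cap J(f)$ provided by the child hypothesis, let $s \in \{0, \ldots, n-1\}$ be the associated index, and set $c_j := f^j(c_0)$ and $V_j$ equal to the pull-back of $C_1 J$ by $f^{n-j}$ containing $c_j$. By the child definition, $f^{n-s-1}\colon V_{s+1} \to C_1 J$ is a diffeomorphism; let $V'_{s+1} \subset V_{s+1}$ be the sub-pull-back of $J$. A straightforward induction on $j$, using that $c_j$ is an extremum of $f|_{V_j}$ whenever $c_j \in \Crit(f)$, shows that $f^{s+1}(W)$ is an interval with $c_{s+1}$ as one endpoint; in particular, writing $r_j := |f^j(a) - c_j|$ for any endpoint $a \in \partial W$, we have $r_{s+1} \le |V'_{s+1}|$.

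The main estimate is the non-flat normal form at each critical visit. For $c_j \in \Crit(f)$, writing $|f(x) - f(c_j)| = |\phi_j(x)|^{\ell_{c_j}}$ and differentiating yields
$$|Df(x)| \cdot |x - c_j| \;=\; \ell_{c_j} \cdot K_j(x) \cdot |f(x) - f(c_j)|,$$
with $K_j(x) \to 1$ as $|x - c_j| \to 0$. For intermediate $c_j \notin \Crit(f)$, the mean value theorem gives the same identity with $\ell_{c_j}$ replaced by $1$ and a similar distortion factor. The cumulative distortion $K$ over the chain is controlled by combining these identities with Koebe-type estimates (via Lemma~\ref{l:Koebe principle}) applied to each diffeomorphic stretch between successive critical visits; since the hyperbolic-repelling hypothesis forbids critical cycles, the orbit of $c_0$ visits $\Crit(f)$ at most $\#\Crit(f)$ times in the chain, leaving only boundedly many factors to track. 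Telescoping for the endpoint $a \in \partial W$ chosen with $r_0 := |a - c_0| \ge |W|/2$ yields
$$r_0 \cdot |Df^{s+1}(a)| \;\le\; K \cdot \widehat{\ell_{c_0}} \cdot r_{s+1} \;\le\; K \cdot \hellmax \cdot |V'_{s+1}|.$$

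Applying Koebe once more to the diffeomorphism $f^{n-s-1}|_{V_{s+1}} \to C_1 J$ gives $|Df^{n-s-1}(f^{s+1}(a))| \le K' |J|/|V'_{s+1}|$, with $K'$ close to $1$ for $C_1$ large. Multiplying, $r_0 \cdot |Df^n(a)| \le K K' \hellmax |J|$. Since $\widehat{\ell_{c_0}} > 1$, applying the same telescoping at both endpoints of $W$ shows that $|Df^n|$ is monotonic on $\partial W$ in the distance to $c_0$, so this $a$ also realizes $\max_{a' \in \partial W} |Df^n(a')|$; combined with $|W| \le 2 r_0$, we obtain $|W| \le 2 K K' \hellmax |J|/\max\{|Df^n(a')| : a' \in \partial W\}$. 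Choosing $\delta_4$ small (so that every $V_j$ is uniformly small by Lemma~\ref{l:pull-backs}(2), making $K$ close to $1$) and $C_1$ large (so that $K'$ is close to $1$) with $2 K K' \le 6$ completes the proof. The principal obstacle is controlling $K$ through arbitrarily long non-critical stretches—addressed by invoking Koebe on each stretch, whose Koebe space is furnished by the $C_1$-extension—while the non-monotonicity of $f^{s+1}$ on $W$ is handled by the chain-of-extrema observation pinning $c_{s+1}$ to the boundary of $f^{s+1}(W) \subset V'_{s+1}$.
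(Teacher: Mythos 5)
Your high-level decomposition $f^n = f^{n-s-1}\circ f^{s+1}$, with the non-flat normal form controlling the critical stretch and the Koebe principle controlling the diffeomorphic tail, is the same as the paper's. The gap is in the step where you assert that $f^{s+1}(W)$ is an interval with $c_{s+1}$ as an endpoint, and hence $r_{s+1}\le |V'_{s+1}|$. Your induction requires $c_0\in\overline W$ as its base case, but the child hypothesis only places $c_0$ in the larger pull-back $\hW$ of $C_1J$; the pull-back $W$ of the smaller interval $J$ need not contain $c_0$ (this happens precisely when $f^n(c_0)\in C_1J\setminus\overline J$). In that case $c_{s+1}$ can lie in $V_{s+1}\setminus\overline{V'_{s+1}}$, $r_{s+1}$ may be comparable to $C_1|V'_{s+1}|$, and your final constant degrades by a factor of $C_1$, which is not allowed to depend on the bound $6\hellmax$. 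The paper sidesteps this entirely: it chooses $\delta_*$ so that the non-flat estimate $|W|\max_{a\in\partial W}|Df^{s+1}(a)|\le 3\,\widehat{\ell_c}\,|f^{s+1}(W)|$ holds for \emph{every} subinterval $W$ of $B(c,\delta_*)$, whether or not $c\in W$, and then compares $|f^{s+1}(W)|$ with $|W'|$ directly. You should state and prove (or at least cite) the estimate in this $c$-free form.

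Two further points. First, the telescoping through the critical chain with per-stretch Koebe control is more machinery than needed: because all periodic points in $J(f)$ are hyperbolic repelling there are no critical cycles, so the last time $s$ with $f^s(c)\in\Crit(f)$ is uniformly bounded over $c\in\Crit'(f)$; the chain $f^{s+1}$ is therefore a fixed finite iteration, and the non-flat estimate near $c$ follows from continuity/compactness alone, with the single Koebe application reserved for the diffeomorphic tail $f^{n-s-1}$. It is also unclear how you would formally invoke Lemma~\ref{l:Koebe principle} on the intermediate stretches, since those are pull-backs of the intervals $V_j$ (which themselves are non-diffeomorphic pull-backs of $C_1J$), not of a fixed small interval. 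Second, the monotonicity claim --- that the endpoint $a$ with $r_0\ge |W|/2$ also maximizes $|Df^n|$ over $\partial W$ --- is not quite right and not needed: the paper instead bounds $|W|$ by $3\widehat{\ell_c}\,|f^{s+1}(W)|\max_{a\in\partial W}|Df^{s+1}(a)|^{-1}$ and uses that $f^{s+1}$ maps $\partial W$ into $\partial W'$ (by boundary-anchoredness and the fact that $f^{n-s-1}\colon W'\to J$ is a homeomorphism), so the max over $\partial W$ of $|Df^n|$ factors through the two maxima cleanly.
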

\begin{proof}
Let~$\delta_0 > 0$ and~$\varepsilon$ in~$(0, 1)$ be given by Lemma~\ref{l:Koebe principle} with~$K = 2$ and let~$\delta_1 > 0$ be given by Lemma~\ref{l:pull-backs}.
Since the critical points of~$f$ are non-flat, there is~$\delta_* > 0$ so that for each~$c$ in~$\Crit'(f)$, each integer~$s \ge 0$ such that~$f^s(c)$ is in~$\Crit'(f)$, and each interval~$W$ contained in~$B(c, \delta_*)$ we have
$$ |W| \max \left\{ |Df^{s + 1}(a)| : a \in \partial W \right\}
\le
3 \widehat{\ell_{c}} |f^{s + 1}(W)|. $$
Let~$\delta_2 > 0$ be given by Lemma~\ref{l:pull-backs}(2) with~$\kappa = \delta_*$.

We prove the lemma with~$\delta_3 = \varepsilon \min \{ \delta_2, \delta_0 \}$ and~$C_1 = \varepsilon^{-1}$.
To do this, let~$J$ be an interval contained in~$I$ that intersects~$J(f)$ and satisfies
\begin{displaymath}
  |J| \le \delta_2
  \text{ and }
  \hJ \= \varepsilon^{-1} J \subset I.
\end{displaymath}
Moreover, let~$n \ge 1$ be an integer and let~$W$ be a pull-back of~$J$ by~$f^n$ such that the pull-back~$\hW$ of~$\hJ$ by~$f^n$ containing~$W$ is a child of~$\hJ$.
Let~$c$ be the unique critical point of~$f$ contained in~$\hW$ and let~$s$ be the largest element of~$\{0, \ldots, n - 1 \}$ such that~$f^s(c)$ is in~$\Crit(f)$.
So either~$s = n - 1$ or the pull-back~$\hW'$ of~$\hJ$ by~$f^{n - s - 1}$ containing~$f^{s + 1}(W)$ is diffeomorphic.
Then the Koebe principle (Lemma~\ref{l:Koebe principle}) implies that, if we denote by~$W'$ the pull-back of~$J$ by~$f^{n - s - 1}$ containing~$f^{s + 1}(W)$, then
$$ |W'|
\le
2 |J| \max \left\{ |Df^{n - s - 1}(a')| : a' \in \partial W' \right\}^{-1}. $$
On the other hand, by our choice of~$\delta_2$ we have~$W \subset \hW \subset B(c, \delta_*)$, so by our choice of~$\delta_*$ we have
\begin{multline*}
|W|
\le
3 \widehat{\ell_{c}} |f^{s + 1}(W)| \max \left\{ |Df^{s + 1}(a)| : a \in \partial W \right\}^{-1}
\\ \le
3 \hellmax |W'| \max \left\{ |Df^{s + 1}(a)| : a \in \partial W \right\}^{-1}.
\end{multline*}
The desired inequality is obtained by combining the last~$2$ displayed inequalities.
\end{proof}

\begin{lemm}
\label{l:maximum principle}
Let~$f \colon I \to I$ be an interval map in~$\sA$ such that all of its periodic points in~$J(f)$ are hyperbolic repelling.
Suppose that none of the boundary points of~$I$ is a critical point of~$f$ and let~$C_1 > 1$ be the constant given by Lemma~\ref{l:child}.
Then, for every~$\eta > 1$ there is a constant~$\delta(\eta) > 0$ such that for every interval~$\hJ$ that intersects~$J(f)$ and satisfies~$|\hJ| \le \delta(\eta)$ and~$C_1 \hJ \subset I$, the following properties hold for every integer~$n \ge 1$ and every pull-back~$\hW$ of~$\hJ$ by~$f^n$:
\begin{enumerate}
\item[1.]
For every interval~$J$ contained in~$\hJ$, the number of pull-backs of~$J$ by~$f^n$ contained in~$\hW$ is bounded from above by~$2 \eta^n$.
\item[2.]
$|\hW|
\le
12 \hellmax \eta^n |\hJ| \max \left\{ |Df^n(a)| : a \in \partial \hW \right\}^{-1}$.
\end{enumerate}
\end{lemm}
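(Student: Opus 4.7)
The plan is to prove parts~(1) and~(2) in tandem, by following the backward orbit of~$\hJ$ through its critical encounters. Write $\hW_k \= f^k(\hW)$ for $0\le k\le n$, and choose $\delta(\eta)>0$ small enough that: (a)~each $\hW_k$ contains at most one critical point of~$f$ (part~1 of Lemma~\ref{l:pull-backs}), Lemma~\ref{l:child} applies on the $C_1$-enlargement, and the Koebe principle (Lemma~\ref{l:Koebe principle}) is available on this enlargement; and (b)~by part~2 of Lemma~\ref{l:pull-backs} applied with a $\kappa=\kappa(\eta)$ to be chosen in function of~$\eta$, every $\hW_k$ has diameter at most~$\kappa$.

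Let $0\le k_1<\cdots<k_m<n$ be the indices for which $\hW_{k_i}$ meets~$\Crit(f)$, with critical point~$c_i$. Between consecutive critical times the map $f^{k_{i+1}-k_i-1}$ restricts to a diffeomorphism on the relevant pull-back, and Koebe on the $C_1$-enlargement bounds its distortion by a universal constant~$K_0$. At each critical time, the pull-back of the $C_1$-enlargement of~$\hW_{k_i+1}$ by~$f$ containing~$\hW_{k_i}$ is a child of~$C_1\hW_{k_i+1}$, and Lemma~\ref{l:child} contributes a factor~$6\hellmax$. Telescoping over the $m+1$ arcs yields a bound of the form
$$|\hW|\le (6\hellmax\,K_0)^{m+1}\,|\hJ|\,\max\{|Df^n(a)|:a\in\partial\hW\}^{-1},$$
and an analogous argument bounds the number of pull-backs of $J\subset\hJ$ inside~$\hW$ by~$\ellmax^{m+1}$.

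The core combinatorial point is then to bound the number~$m$ of critical visits by approximately $n\log\eta/\log(6\hellmax K_0)$. The heuristic is that for $\kappa$ small, the non-flatness of the critical points forces $|\hW_{k_i+1}|\lesssim\kappa^{\ell_{c_i}}\ll\kappa$, and the Koebe-controlled diffeomorphic iterations between $k_i+1$ and $k_{i+1}$ can only enlarge the diameter by a bounded factor per step; hence reaching a position where $\hW_{k_{i+1}}$ again meets~$\Crit(f)$ requires at least $T(\kappa)$ iterations, with $T(\kappa)\to\infty$ as $\kappa\to0$. Choosing $\kappa(\eta)$ small enough that $T(\kappa(\eta))\log(6\hellmax K_0)\ge\log\eta$ produces $m\le n/T(\kappa(\eta))\le n\log\eta/\log(6\hellmax K_0)$, and the numerical factors collapse into $12\hellmax\eta^n$ for~(2) and $2\eta^n$ for~(1).

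The hard part will be making the separation estimate $T(\kappa)\to\infty$ rigorous, since we have no uniform hyperbolicity on~$J(f)$. I expect the argument to use, just as in part~2 of Lemma~\ref{l:pull-backs}, the hyperbolic-repelling hypothesis on all periodic points of~$J(f)$ together with essential topological exactness, to control pull-backs of small intervals uniformly in the number of iterations and rule out orbits that accumulate too rapidly on~$\Crit(f)$.
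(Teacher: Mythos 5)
Your overall scaffold — induct over the critical encounters of the pull-back chain, use Lemma~\ref{l:child} at each encounter, use Koebe in between, and bound the number of encounters — is the same as the paper's. But the crux, bounding the number of encounters by roughly~$n\ln\eta/\ln(6\hellmax)$, is exactly where your argument has a genuine gap, and the mechanism you propose for filling it is not the right one.

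Your heuristic asserts that $|\hW_{k_i+1}|\lesssim\kappa^{\ell_{c_i}}$ and that Koebe-controlled iterations ``can only enlarge the diameter by a bounded factor per step,'' so that the pull-back needs many steps to ``grow back'' before the next critical encounter. This does not work: a critical encounter happens whenever $\hW_k$ \emph{contains a point of~$\Crit(f)$}, which is a statement about position, not about size. There is nothing stopping a tiny interval $\hW_{k_i+j}$ from containing a critical point after very few iterations — only the position of the forward orbit of $c_i$ relative to $\Crit(f)$ matters. Your size-tracking never touches this, and the hypotheses you anticipate invoking (hyperbolic repelling periodic orbits, essential topological exactness) have no bearing on it either; in the paper they enter only through part~2 of Lemma~\ref{l:pull-backs}, which makes \emph{all} pull-backs uniformly small, and this is something your argument also already assumes.

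The paper closes the gap with a soft compactness argument that is simpler than what you are attempting. Fix an integer $L$ with $\eta^L>6\hellmax$. The critical point produced at each encounter is \emph{exposed}: its forward orbit never returns to $\Crit(f)$. Since there are only finitely many exposed critical points and $L$ is finite, one may pick $\delta_*>0$ so that $f^j(c)\notin B(\Crit(f),\delta_*)$ for every exposed $c$ and every $j\in\{1,\ldots,L\}$. Then take $\kappa\le C_1^{-1}\delta_*$ in part~2 of Lemma~\ref{l:pull-backs}: all pull-backs (and their $C_1$-enlargements) have diameter $\le\delta_*$, so a pull-back at time $j\le L$ after an encounter at the exposed critical point $c$ cannot meet $\Crit(f)$. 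Hence consecutive child times differ by at least $L$, the number of encounters is $\le n/L+1$, and $(6\hellmax)^{n/L}\le\eta^n$ drops out immediately. One more discrepancy: you enumerate \emph{every} index $k$ with $\hW_k\cap\Crit(f)\neq\emptyset$, but the paper works with \emph{children} of successive $C_1$-enlargements; the child structure absorbs critical orbits that pass through several critical points in a row into a single encounter with weight $\hellmax$, which is both what makes the $6\hellmax$ factor accurate and what guarantees the critical point at each encounter is exposed so that the $\delta_*$-argument applies. If you enumerate raw critical encounters instead, consecutive ones can be a single iterate apart with no diffeomorphic block between them, and the Koebe/child bookkeeping breaks.
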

\begin{proof}
Let~$\delta_0 > 0$ and~$\varepsilon$ in~$(0, 1)$ be given by Lemma~\ref{l:Koebe principle} with~$K = 2$, let~$\delta_1 > 0$ be given by Lemma~\ref{l:pull-backs}(1), and let~$\delta_3 > 0$ and~$C_1 > 1$ be given by Lemma~\ref{l:child}.
Enlarging~$C_1$ if necessary we assume~$C_1 \ge \varepsilon^{-1}$.
On the other hand, let~$L \ge 1$ be a sufficiently large integer such that $\eta^L > 6\hellmax$ and let~$\delta_* > 0$ be sufficiently small so that for every exposed critical point~$c$ of~$f$ and every~$j$ in~$\{0, \ldots, L \}$, the point~$f^j(c)$ is not in~$B(\Crit(f), \delta_*)$.
Finally, let~$\delta_2$ be given by Lemma~\ref{l:pull-backs}(2) with
\begin{displaymath}
  \kappa
  \=
C_1^{-1} \min \left\{ \delta_0, \delta_1, \delta_3, \delta_*, \dist(\Crit(f), \partial I) \right\}.
\end{displaymath}

We prove the lemma with~$\delta(\eta) = \delta_2$.
To do this, let~$\hJ$ be an interval that intersects~$J(f)$ and satisfies~$|\hJ| \le \delta_2$ and~$C_1 \hJ \subset I$, let~$n \ge 1$ be an integer, and let~$\hW$ be a pull-back of~$\hJ$ by~$f^n$.
Put~$m_0 \= n$ and~$\hW_0 \= \hJ$ and define inductively an integer $k \ge 0$ and integers
$$  m_0 > m_1 > \cdots > m_k \ge 0, $$
such that for each~$t$ in~$\{1, \ldots, k \}$ the pull-back~$\hW_t$ of~$\hJ$ by~$f^{n - m_t}$ containing~$f^{m_t}(\hW)$ is contained in~$B(\Crit(f), \kappa)$.
Note that by our choice of~$\delta_2$ this last property implies that~$C_1 \hW_t \subset I$.
Recalling that~$m_0 = n$, let~$t \ge 0$ be an integer such that~$m_t$ is already defined.
If~$m_t = 0$, or if the pull-back of~$C_1 \hW_t$ by~$f^{m_t}$ containing~$\hW$ is diffeomorphic, then put~$k = t$ and stop.
Otherwise, define~$m_{t + 1}'$ as the largest integer~$m$ in~$\{0, \ldots, m_t - 1 \}$ such that the pull-back~$\hW_{t + 1}'$ of~$C_1 \hW_t$ by~$f^{m_t - m}$ containing~$f^m(\hW)$ is not diffeomorphic.
In view of Lemma~\ref{l:pull-backs}(1), it follows that~$\hW_{t + 1}'$ contains a unique critical point and that this critical point is in~$J(f)$.
Moreover, $\hW_{t + 1}'$ is a child of~$C_1 \hW_t$.
Define~$m_{t + 1}$ as the smallest integer~$m$ in~$\{ 0, \ldots, m_{t + 1}' \}$ such that the pull-back~$W_*$ of~$C_1 \hW_t$ by~$f^{m_t - m}$ containing~$f^m(\hW)$ is a child of~$C_1 \hW_t$.
Clearly, $\hW_{t + 1} \subset W_* \subset B(\Crit(f), \kappa)$.

Note that if~$k = 0$, then the pull-back of~$C_j \hJ$ by~$f^n$ containing~$\hW$ is diffeomorphic; in particular~$f^n \colon \hW \to \hJ$ is diffeomorphic.
On the other hand, note that for every~$t$ in~$\{ 1, \ldots, k - 1 \}$ the unique critical point in~$\hW_{t + 1}'$ is exposed.
So, by definition of~$L$ we have
$$ m_t - m_{t + 1} \ge m_t - m_{t + 1}' \ge L. $$

To prove item~$1$ of the lemma, observe that if~$k = 0$, then~$f^n \colon \hW \to \hJ$ is a diffeomorphism and the desired assertion is trivially true.
Suppose~$k \ge 1$ and let~$J$ be an interval contained in~$\hJ$.
It follows from the definitions that for every~$t$ in~$\{1, \ldots, k \}$ the map~$f^{m_{t - 1} - m_t}$ has at most one critical point in~$f^{m_t}(\hW)$.
Furthermore, an induction argument in~$t$ shows that there are at most~$2^t$ pull-backs of~$J$ by~$f^{n - m_t}$ contained in the pull-back of~$\hJ$ containing~$f^{m_t}(\hW)$.
Since
$$ 2^k \le 2 \eta^{(k - 1)L} \le 2 \eta^{m_1 - m_k} \le 2 \eta^n, $$
the last assertion with~$t = k$ proves item~$1$ of the lemma in the case where~$m_k = 0$.
If~$m_k \ge 1$, then it follows from the definitions that the pull-back of~$C_1 \hW_k$ by~$f^{m_k}$ containing~$\hW$ is diffeomorphic.
So the number of pull-backs of~$J$ by~$f^n$ contained in~$\hW$ is also bounded from above by~$2 \eta^n$.
This completes the proof of item~$1$ of the lemma.

To prove item~$2$, suppose first~$k = 0$.
Then the pull-back of~$C_1 \hJ$ by~$f^n$ containing~$\hW$ is diffeomorphic and the desired inequality follows from the Koebe principle (Lemma~\ref{l:Koebe principle}) with~$12 \hellmax \eta^n$ replaced by~$2$. 
Suppose~$k \ge 1$ and observe that by Lemma~\ref{l:child} for each~$t$ in~$\{1, \ldots, k \}$ we have
$$ |\hW_t|
\le
6 \hellmax |\hW_{t - 1}| \max \left\{ |Df^{m_{t - 1} - m_{t}}(a)| : a \in \partial \hW_{t} \right\}^{-1}. $$
By an induction argument we obtain,
$$ |\hW_{k}|
\le
(6 \hellmax)^{k} |\hJ| \max \left\{ |Df^{n - m_{k}}(a')| : a' \in \partial \hW_{k} \right\}^{-1}. $$
Using
$$(6 \hellmax)^{k - 1} < \eta^{(k - 1)L} \le \eta^{m_1 - m_{k}} \le \eta^n, $$
we obtain
$$ |\hW_k| \le 6 \hellmax \eta^n \max \left\{ |Df^{n - m_{k}}(a) : a \in \partial \hW_k \right\}^{-1}. $$
This proves item~$2$ of the lemma in the case where~$m_k = 0$.
If~$m_k \ge 1$, then the pull-back of~$C_1 \hW_k$ by~$f^{m_k}$ containing~$\hW$ is diffeomorphic and by the Koebe principle (Lemma~\ref{l:Koebe principle}) we obtain
\begin{multline*}
|\hW|
\le
2 |\hW_k| \max \left\{ |Df^{m_k}(a)| : a \in \partial \hW \right\}^{-1}
\\ \le
12 \hellmax |\hJ| \max \left\{ |Df^n(a)| : a \in \partial \hW \right\}^{-1}.
\end{multline*}
This completes the proof of item~$2$ and of the lemma.
\end{proof}

The following lemma is more general than what we need for the proof of Proposition~\ref{p:CE2 implies ESC}.
It is used again in the proof of the Main Theorem in the next section.

\begin{lemm}
\label{l:periodic bound}
Let~$f \colon I \to I$ be an interval map in~$\sA$ that is topologically exact on~$J(f)$ and put
$$ \chiper^0(f)
\=
\inf \left\{ \chi_p(f) : p \text{ periodic point of~$f$ in~$J(f)$} \right\}. $$
Then for every interval~$J$ contained in~$I$ that intersects~$J(f)$ we have
\begin{equation}
\label{e:ESC and periodics}
\liminf_{n \to + \infty} \frac{1}{n} \ln \max \left\{ |W| : W \text{ connected component of~$f^{-n}(J)$} \right\}
\ge
- \chiper^0(f)
\end{equation}
and for every point~$x_0$ of~$J(f)$ we have
\begin{equation}
\label{e:CE2 and periodics}
\limsup_{n \to + \infty} \frac{1}{n} \ln \min \left\{ |Df^n(x)| : x \in f^{-n}(x_0) \right\}
\le
\chiper^0(f).
\end{equation}
\end{lemm}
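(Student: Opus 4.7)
The plan is to exploit a hyperbolic repelling periodic point $p$ in $J(f)$ whose Lyapunov exponent $\chi_p(f)$ nearly achieves the infimum $\chiper^0(f)$. The key construction is the inverse branch $g$ of $f^m$ at $p$, acting as a uniform contraction on a small neighborhood $U$ of $p$ with rate close to $e^{-m\chi_p(f)}$. Applied at a point $q \in J \cap J(f)$ for the first inequality, or at $x_0$ itself for the second, this yields both bounds at once.

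Fix $\varepsilon > 0$ small, choose a hyperbolic repelling periodic point $p$ of period $m$ in $J(f)$ with $\chi_p(f) < \chiper^0(f) + \varepsilon$, and arrange that $\varepsilon < \chi_p(f)$ (passing to smaller $\varepsilon$ after $p$ is selected). Since $|Df^m(p)| > 1$, none of $p, f(p), \ldots, f^{m-1}(p)$ is a critical point of $f$, so by continuity of $Df^m$ there exist $r_0 > 0$ and an inverse branch $g : U \to U$ of $f^m$ fixing $p$, where $U = B(p, r_0)$, satisfying $|Df^m(z)| \in [e^{m(\chi_p(f)-\varepsilon)}, e^{m(\chi_p(f)+\varepsilon)}]$ for every $z \in U$. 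Consequently $g$ contracts $U$ strictly into itself and $|Dg^k(y)| \in [e^{-km(\chi_p(f)+\varepsilon)}, e^{-km(\chi_p(f)-\varepsilon)}]$ for every $k \ge 0$ and $y \in U$. By topological exactness on $J(f)$, fix $N \ge 1$ with $f^N(U) \supseteq J(f)$.

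For~\eqref{e:ESC and periodics}, pick $q \in J \cap J(f)$ and $y_0 \in U$ with $f^N(y_0) = q$. Shrinking a subinterval $V \subseteq J \cap J(f)$ around $q$, Lemma~\ref{l:pull-backs}(2) guarantees that the connected component $W'$ of $f^{-N}(V)$ containing $y_0$ lies in $U$. For every $k \ge 0$ the interval $g^k(W')$ has length at least $e^{-km(\chi_p(f)+\varepsilon)}|W'|$ and is contained in some connected component of $f^{-(N+km)}(J)$. For $n = N+km+r$ with $0 < r < m$, any connected component of $f^{-r}(g^k(W'))$ (nonempty because $g^k(W') \subseteq J(f) = f^r(J(f))$) is a pull-back of $J$ by $f^n$ of length at least $|g^k(W')|/(\sup_I|Df|)^m$. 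This gives $\liminf_n \frac{1}{n}\ln \max\{|W| : W \text{ component of } f^{-n}(J)\} \ge -\chi_p(f) - \varepsilon$; letting $\varepsilon$ decrease to $0$ (and re-selecting $p$ accordingly) concludes.

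For~\eqref{e:CE2 and periodics}, topological exactness yields $y_0 \in U$ with $f^N(y_0) = x_0$; set $y_k := g^k(y_0) \in U$, so that $y_k \in f^{-(N+km)}(x_0)$ and by the chain rule
$$|Df^{N+km}(y_k)| = |Df^N(y_0)| \prod_{j=1}^{k}|Df^m(g^j(y_0))| \le |Df^N(y_0)| \cdot e^{km(\chi_p(f)+\varepsilon)}.$$
For $n = N+km+r$ with $0 < r < m$, pick $y_k' \in f^{-r}(y_k)$, which exists by complete invariance of $J(f)$; then $y_k' \in f^{-n}(x_0)$ and $|Df^n(y_k')| \le (\sup_I|Df|)^m \cdot |Df^{N+km}(y_k)|$. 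Thus $\limsup_n \frac{1}{n}\ln \min_{x \in f^{-n}(x_0)}|Df^n(x)| \le \chi_p(f) + \varepsilon$, and letting $\varepsilon \downarrow 0$ concludes. The main point of technical care is the calibration of $U$ and $\varepsilon$ so that $g$ is a strict contraction with the advertised derivative bounds, together with the use of Lemma~\ref{l:pull-backs}(2) to fit $W'$ inside $U$; the remainder is chain-rule bookkeeping with the bounded residue $r$ absorbed into the $o(n)$ error.
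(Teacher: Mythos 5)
Your argument for the case of a hyperbolic repelling periodic point is essentially the paper's (inverse branch contracting near a repelling point, pushed forward to $J$ or $x_0$ by topological exactness), but it leaves a genuine gap: it only treats hyperbolic repelling periodic points, while the quantity $\chiper^0(f)$ in the statement is the infimum over \emph{all} periodic points of $f$ in $J(f)$, including neutral ones. A periodic point $p$ of period $\ell$ in $J(f)$ satisfies $|Df^\ell(p)|\ge 1$ but may satisfy $|Df^\ell(p)|=1$, in which case $\chi_p(f)=0$ and hence $\chiper^0(f)=0$; yet your argument only lets you choose a \emph{hyperbolic repelling} $p$ with $\chi_p(f)<\chiper^0(f)+\varepsilon$, which is impossible when $\chiper^0(f)=0$ but every hyperbolic repelling periodic point has exponent bounded below by some $c>0$. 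In that situation your method yields $\liminf\ge-\chiperf$ and $\limsup\le\chiperf$, which is strictly weaker than the required bound with $\chiper^0(f)=0$. You cannot appeal to $\chiper^0(f)=\chiperf$, since that identity is established only in Part~3 of the proof of the Main Theorem' and depends on this very lemma.

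This case is not a technicality: in the proof of Proposition~\ref{p:CE2 implies ESC} the lemma is invoked precisely to conclude from $\limsup_n\frac1n\ln\min|Df^n|\ge\ln\lambda>0$ that \emph{all} periodic points of $f$ in $J(f)$ are hyperbolic repelling. That deduction needs the bound \eqref{e:CE2 and periodics} in terms of $\chiper^0(f)$, so that a neutral periodic point (which would force $\chiper^0(f)=0$) is ruled out; a bound in terms of $\chiperf$ alone would say nothing about neutral points. The paper closes the gap by treating separately a periodic point $p$ with $Df^{2\ell}(p)=1$: one chooses $\eta>1$, shrinks $\delta$ so that $|Df|<\eta$ on $B(p,\delta)$, takes the one-sided inverse branch $\phi$ of $f^{2\ell}$ on $(p,p+\delta)$ fixing $p$, and obtains the inequalities with $\chi_p(f)$ replaced by an arbitrarily small error coming from $\eta$. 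You would need to add a parallel argument of this kind.

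Two smaller points. First, you invoke Lemma~\ref{l:pull-backs}(2), whose hypotheses include that all periodic points of $f$ in $J(f)$ are hyperbolic repelling and that $f$ is essentially topologically exact on $J(f)$ -- neither of which is among the hypotheses of the present lemma, so that appeal is not justified here. It is also unnecessary: instead of requiring the full connected component $W'$ of $f^{-N}(V)$ to lie inside $U$, simply replace $W'$ by $W'\cap U$; this is still an interval containing $y_0$, it is carried into $J$ by $f^N$, and $g^k(W'\cap U)$ is an interval contained in a connected component of $f^{-(N+km)}(J)$, which is all the length estimate needs. Second, for the $\limsup$ bound, using the chain rule along $y_k=g^k(y_0)$ is exactly the paper's $|D\phi^k(x_0')|$ computation, so that part is fine modulo the neutral-point gap.
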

\begin{proof}
Let~$\ell \ge 1$ be an integer and let~$p$ be a periodic point of~$f$ of period~$\ell$ in~$J(f)$.

Suppose first~$p$ is hyperbolic repelling.
Then there is~$\delta > 0$ and a uniformly contracting inverse branch~$\phi$ of~$f^{\ell}$ that is defined on~$B(p, \delta)$ and fixes~$p$.
It follows that $\phi(\overline{B(p, \delta)}) \subset B(p, \delta)$ and that there is~$K > 1$ such that for every integer~$k \ge 1$ the distortion of~$\phi^k$ on~$B(p, \delta)$ is bounded by~$K$.
On the other hand, the hypothesis that~$f$ is topologically exact on~$J(f)$ implies that there is an integer~$m \ge 1$ such that the intersection of~$f^{-m}(J)$ and~$B(p, \delta)$ contains an interval~$J'$ and such that there is a point~$x_0'$ in~$f^{-m}(x_0)$ contained in~$B(p, \delta)$.
Then we have
\begin{multline}
\label{e:inverse ESC}
\liminf_{n \to + \infty} \frac{1}{n} \ln \max \left\{ |W| : W \text{ connected component of~$f^{-n}(J)$} \right\}
\\ \ge
\liminf_{k \to + \infty} \frac{1}{k\ell} \ln |\phi^k(J')|
=
- \chi_p(f)
\end{multline}
and
\begin{multline}
\label{e:inverse CE2}
\limsup_{n \to + \infty} \frac{1}{n} \ln \min \left\{ |Df^n(x)| : x \in f^{-n}(x_0) \right\}
\\ \le
- \lim_{k \to + \infty} \frac{1}{k\ell} \ln |D\phi^k(x_0')|
=
\chi_p(f).
\end{multline}
Since~$p$ is an arbitrary hyperbolic repelling periodic point, this proves~\eqref{e:ESC and periodics} and~\eqref{e:CE2 and periodics}.

It remains to consider the case where~$p$ is not hyperbolic repelling, so that~$Df^{2\ell}(p) = 1$.
Without loss of generality we assume that for every~$\delta > 0$ the interval~$(p, p + \delta)$ intersects~$J(f)$.
Let~$\eta > 1$ be given and let~$\delta > 0$ be sufficiently small so there is an inverse branch~$\phi$ of~$f^{2 \ell}$ that is defined on~$B(p, \delta)$, that fixes~$p$, and that is strictly increasing on~$(p, p + \delta)$.
Reducing~$\delta$ if necessary we assume we have~$|Df| < \eta$ on~$B(p, \delta)$.
As in the previous case there is an integer~$m \ge 1$ such that the intersection of~$f^{-m}(J)$ and~$(p, p + \delta)$ contains an interval~$J'$ and such that there is a point~$x_0'$ in~$f^{-m}(x_0)$ contained in~$(p, p + \delta)$.
Then we have~\eqref{e:inverse ESC} and~\eqref{e:inverse CE2} with~$\chi_p(f)$ replaced by~$\varepsilon$.
Since~$\varepsilon > 0$ is arbitrary, these inequalities hold with~$\chi_p(f) = 0$.
The proof of the lemma is thus completed.
\end{proof}

\begin{proof}[Proof of Proposition~\ref{p:CE2 implies ESC}]
  By Lemma~\ref{l:periodic bound} all the periodic points of~$f$ in~$J(f)$ are hyperbolic repelling.
  It is enough to show that for every~$\hlambda_0$ in~$(\lambda_0, \lambda)$ there is a constant~$C_0 > 0$ such that the proposition holds with the right hand side of~\eqref{e:weak ESC} replaced by~$C_0 \hlambda_0^{-n}$.

Let~$\tI$ be equal to~$I$ if~$J(f) = I$.
Otherwise, for each periodic point~$y$ in the boundary of a Fatou component~$U$ of~$f$, let~$y'$ be a point in~$U$, let~$U_y$ be the open interval bounded by~$y$ and~$y'$, and put
$$ \tI \= I \setminus \bigcup_y U_y, $$
where the union runs through all the periodic points of in the boundary of a Fatou component of~$f$.
In all the cases~$\tI$ is a finite union of closed intervals.
In part~$1$ below we show that for every~$y$ in~$J(f)$ there is a constant~$C_y > 0$ and an interval~$J_y$ contained in~$\tI$ that is a neighborhood of~$y$ in~$\tI$ and such that for every integer~$n \ge 1$ and every pull-back~$W$ of~$J_y$ by~$f^n$ we have
$$ |W| \le C_y \hlambda_0^{-n}. $$
Since~$J(f)$ is compact, this implies the proposition, except in the case where~$J(f)$ is an interval having a boundary point in the interior of~$I$ that is a periodic point of~$f$.
This last case is treated in part~$2$.

Let~$\hI$ be a compact interval containing~$I$ in its interior and let~$\whf \colon \hI \to \hI$ be an extension of~$f$ in~$\sA$ that is boundary anchored, such that all the critical points of~$\whf$ are contained in~$I$, and such that~$\bigcup_{n = 0}^{+ \infty} \whf^{-n}(I)$ contains an interval whose closure contains~$J(\whf)$.
Note in particular that~$\whf$ is essentially topologically exact on~$J(\whf)$.
Without loss of generality we assume that all the periodic points of~$\whf$ in~$J(\whf)$ are hyperbolic repelling.
Put~$\eta \= (\lambda / \hlambda_0)^{1/2}$ and let~$\delta_* > 0$ be the constant~$\delta(\eta)$ given by Lemma~\ref{l:maximum principle} with~$f$ replaced by~$\whf$.
Moreover, let~$C_1 > 1$ be the constant given by Lemma~\ref{l:child}.
Reducing~$\delta_*$ if necessary we assume
$$ \delta_* < C_1^{-1} \dist(I, \partial \hI). $$
Note that this implies that for every interval~$J$ intersecting~$I$ and satisfying~$|J| \le \delta_*$, we have~$C_1 J \subset \hI$.

\partn{1}
Suppose first~$y$ is not a boundary point of a Fatou component of~$f$ of length greater than or equal to~$\delta_*/2$.
Since~$f$ is topologically exact on~$J(f)$, we can find an integer~$n_0 \ge 1$ and points~$x$ and~$x'$ in~$f^{-n_0}(x_0)$ such that
$$ x < y < x'
\text{ and }
|x - x'| < \delta_*. $$
Then the desired assertion follows with
$$ J_y = (x, x')
\text{ and }
C_y = 12 \hellmax C^{-1} \delta_*, $$
by Lemma~\ref{l:maximum principle}(2) with~$f$ replaced by~$\whf$ and with~$\hJ = (x, x')$.

Suppose~$y$ is a boundary point of a Fatou component of~$f$ and that~$y$ is not periodic.
Then there is an integer~$N \ge 1$ such that every point in~$f^{-N}(y)$ is either not in the boundary of a Fatou component or in the boundary of a Fatou component of length strictly smaller than~$\delta_*/2$.
Then the desired assertion follows from the previous case.

It remains to consider the case where~$y$ is a periodic point in the boundary of a Fatou component of length greater than or equal to~$\delta_*/2$.
Let~$\ell \ge 1$ be the period of~$y$ and let~$\delta$ in~$(0, \delta_*/2)$ be sufficiently small so that there is an inverse~$\phi$ of~$\whf^\ell$ defined on~$B(y, \delta)$, fixing~$y$ and such that~$\phi(\overline{B(y, \delta)}) \subset B(y, \delta)$.
Since~$\delta < \delta_*/2$ and~$y$ is a boundary point of a Fatou component of~$f$ of length greater than or equal to~$\delta_*/2$, it follows that~$\phi$ is strictly increasing.
Let~$n_0 \ge 1$ be a sufficiently large integer so that~$f^{-n_0}(x_0)$ intersects $B(y, \delta)$ and let~$y_0$ be a point of~$f^{-n_0}(x_0)$ in~$B(y, \delta)$.
For each integer~$j \ge 1$ put~$y_j \= \phi^j(y_0)$ and let~$K_{j - 1}$ be the closed interval bounded by~$y_{j - 1}$ and~$y_j$.
Note that the intervals~$(K_j)_{j = 0}^{+ \infty}$ have pairwise disjoint interiors and that the closure of their union is equal to the closed interval~$J_y$ bounded by~$y$ and~$y_0$.
Clearly~$J_y$ is a neighborhood of~$y$ in~$\tI$.
On the other hand, for each integer~$j \ge 1$ the interval~$K_j$ is equal to~$\phi^j(K_0)$ and it is a pull-back of~$K_0$ by~$\whf^{\ell j}$.
So, Lemma~\ref{l:maximum principle}(2) with~$\hJ = K_0$, with~$f$ replaced by~$\whf$, and with~$n$ replaced by~$n + \ell j$, shows that for every pull-back~$W$ of~$K_j$ by~$\whf^n$ we have
\begin{equation*}
  \begin{split}
|W|
& \le
12 \hellmax \eta^{n + j\ell } |K_0| \max \left\{ |D\whf^{n + j\ell }(a)| : a \in \partial W \right\}^{-1}
\\ & \le
12 \hellmax \eta^{n + j\ell } \delta_* C^{-1} \lambda^{-(n + j\ell  + n_0)} \min \left\{ |D\whf^{n_0}(y_0)|^{-1}, |D\whf^{n_0 + \ell }(y_1)|^{-1} \right\}.
  \end{split}
\end{equation*}
On the other hand, by Lemma~\ref{l:maximum principle}(1) with~$f$ replaced by~$\whf$ and with~$\hJ = J_y$ and~$J = K_j$, every pull-back~$\hW$ of~$J_y$ by~$f^n$ contains at most~$2 \eta^n$ pull-backs of~$K_j$ by~$f^n$.
So, letting
$$ C'
\=
12 \hellmax \delta_* C^{-1} \lambda^{- n_0} \min \left\{ |D\whf^{n_0}(y_0)|^{-1}, |D\whf^{n_0 + \ell }(y_1)|^{-1} \right\} $$
and using the definition of~$\eta$ we obtain
\begin{equation*}
|\hW \cap \whf^{-n}(K_j)|
\le
2 \eta^n C' \eta^{n + j\ell } \lambda^{-(n + j\ell )}
\le
2 C' \hlambda_0^{- (n + j\ell )}.
\end{equation*}
Since~$J_y$ is the closure of~$\bigcup_{j \ge 0} K_j$, summing over~$j$ we get
$$ |\hW|
\le
2C' \sum_{j = 0}^{+ \infty} \hlambda_0^{-(n + j\ell )}
=
2C' (1 - \hlambda_0^{-\ell })^{-1} \hlambda_0^{-n}. $$
This proves the desired assertion with~$C_y = 2 C' (1 - \hlambda_0^{-\ell })^{-1}$.

\partn{2}
Suppose that~$J(f)$ is an interval having a boundary point~$y$ in the interior of~$I$ that is a periodic point of~$f$.
In view of part~$1$, it is enough to show that for each such point~$y$ there are~$\delta > 0$ and~$C > 0$ such that for every integer~$n \ge 1$ and every pull-back~$W$ of~$B(y, \delta)$ by~$f^n$, we have~$|W| \le C \hlambda_0^{-n}$.
By part~$1$ there are~$\delta > 0$ and~$C > 0$ such that this property holds with~$B(y, \delta)$ replaced by the interval~$J \= B(y, \delta) \cap J(f)$.

Let~$\cO$ be the forward orbit of~$y$.
Note that~$\cO \subset \partial I$, that the set~$\cO' \= f^{-1}(\cO) \cap \partial J(f)$ is forward invariant, and that~$f^{-1}(\cO') \setminus \cO'$ is contained in the interior of~$J(f)$.
Reducing~$\delta$ if necessary assume that each pull-back of~$B(y, \delta)$ by~$f$ or by~$f^2$ that is disjoint from~$\cO'$ is contained in~$J(f)$.
It follows that for every integer~$n \ge 1$, each pull-back~$W$ of~$B(y, \delta)$ by~$f^n$ that is disjoint from~$\cO'$ is contained in~$J(f)$ and therefore coincides with a pull-back of~$J$ by~$f^n$.
By our choice of~$\delta$, in this case we have~$|W| \le C \hlambda_0^{-n}$.
It remains to consider those pull-backs~$W$ of~$B(y, \delta)$ that intersect~$\cO'$.
Since by Lemma~\ref{l:periodic bound} the periodic point~$y$ satisfies~$\chi_y(f) \ge \ln \lambda$, reducing~$\delta$ if necessary we can assume that for every integer~$n \ge 1$ and every pull-back~$W$ of~$B(y, \delta)$ by~$f^n$ that intersects~$\cO'$, we have~$|W| \le C\hlambda_0^{-n}$.
This completes the proof of the proposition.
\end{proof}

\section{Quantifying asymptotic expansion}
\label{s:quantified expansion}
The purpose of this section is to prove the following strengthened version of the Main Theorem.
Given a compact space~$X$ and a continuous map $T \colon X \to X$, we denote by~$\sM(X, T)$ the space of Borel probability measures on~$X$ that are invariant by~$T$.

\begin{generic}[Main Theorem']
For an interval map~$f$ in~$\sA$, the number
$$ \chiperf
\=
\inf \left\{ \chi_p(f) : p \text{ hyperbolic repelling periodic point of~$f$ in~$J(f)$} \right\} $$
is equal to
$$ \chiinff
\=
\left\{ \chi_\nu(f) : \nu \in \sM(J(f), f) \right\}. $$
If in addition~$f$ is topologically exact on~$J(f)$, then there is~$\delta' > 0$ such that the following properties hold.
Let~$J$ be an interval contained in~$I$ that intersects~$J(f)$ and satisfies~$|J| \le \delta'$.
In the case where~$\chiinff > 0$ and where~$J(f)$ is not an interval,
assume in addition that~$J$ is not a neighborhood of a periodic point in the boundary of a Fatou component of~$f$.
Then:
\begin{enumerate}
\item[1.]
For every~$\chi < \chiinff$ there is a constant~$C > 0$ independent of~$J$, such that for every integer~$n \ge 1$ and every pull-back~$W$ of~$J$ by~$f^n$, we have $|W| \le C \exp(- n \chi)$.
\item[2.]
We have
$$ \lim_{n \to + \infty} \frac{1}{n} \ln \max \left\{ |W| : W \text{ connected component of~$f^{-n}(J)$} \right\}
=
- \chiinff. $$
\end{enumerate}
Finally, for each point~$x_0$ in~$J(f)$ we have
$$ \limsup_{n \to + \infty} \frac{1}{n} \ln \min \left\{ |Df^n(x)| : x \in f^{-n}(x_0) \right\}
\le
\chiinff, $$
and there is a subset~$E$ of~$J(f)$ of zero Hausdorff dimension such that for each point~$x_0$ in~$J(f) \setminus E$  the $\limsup$ above is a limit and the inequality an equality.
\end{generic}

\begin{rema}
\label{r:disconnected ESC}
In the case where~$\chiinff > 0$ and where~$J(f)$ is not an interval, there is an example showing that the hypothesis in the Main Theorem' that~$J$ is not a neighborhood of a periodic point in the boundary of a Fatou component, is necessary, see~\cite[Proposition~A]{Riv1206}.
However, a qualitative result holds when this hypothesis is not satisfied, see~\cite[Theorem~B]{Riv1206}.
\end{rema}

The proof of the Main Theorem' is given below, after the following lemmas from~\cite{PrzRivSmi03}.

When~$f$ is a complex rational map the following lemma is a direct consequence of~\cite[Lemma~$3.1$]{PrzRivSmi03}.
Using Fact~\ref{f:positive entropy}, the proof applies without change to the case where~$f$ is a map in~$\sA$.
\begin{lemm}
\label{l:UHP implies CE2}
Let~$f$ be an interval map in~$\sA$ that is topologically exact on~$J(f)$ and such that~$\chiperf > 0$.
Then there is a point~$x_0$ in~$J(f)$ such that
$$ \liminf_{n \to + \infty} \frac{1}{n} \ln \min \left\{ |Df^n(x)| : x \in f^{-n}(x_0) \right\}
\ge
\chiperf. $$
\end{lemm}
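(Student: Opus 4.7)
My plan is to follow the telescoping argument used to prove Lemma~$3.1$ of~\cite{PrzRivSmi03}, replacing the appeal to openness of complex rational maps on the Riemann sphere by Fact~\ref{f:positive entropy} combined with the fact that every periodic point of~$f$ in~$J(f)$ is hyperbolic repelling (which follows from~$\chiperf > 0$ via Lemma~\ref{l:periodic bound}).

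First I would fix~$\chi$ in~$(0, \chiperf)$, apply Fact~\ref{f:positive entropy} to obtain a compact $f$-invariant set~$X \subset J(f)$, disjoint from~$\Crit(f)$, on which some iterate~$f^M$ is uniformly expanding, and then pick~$x_0$ to be a periodic point of~$f$ lying in~$X$. Since every periodic orbit in~$X$ is hyperbolic repelling, standard hyperbolic theory on~$X$ supplies the shadowing and specification properties, and the Lyapunov exponent of any periodic orbit in~$X$ is at least~$\chiperf > \chi$. Topological exactness on~$J(f)$ furnishes a radius~$r > 0$ and an integer~$N_0 \ge 1$ such that~$f^{N_0}(B(y, r)) = J(f)$ for every~$y \in J(f)$; this is the tool I use to redirect an arbitrary backward trajectory into~$X$ in a single controlled step.

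Next I would consider an arbitrary $y \in f^{-n}(x_0)$ and decompose the forward trajectory $y, f(y), \ldots, f^n(y) = x_0$ into two types of maximal blocks: \emph{expanding blocks} on which $f^j(y)$ lies in a fixed neighborhood of~$X$ (and thus bounded away from~$\Crit(f)$), and \emph{excursions} in between. On each expanding block the uniform hyperbolicity of~$X$ together with bounded-distortion estimates from the Koebe principle (Lemma~\ref{l:Koebe principle}) give derivative growth at rate at least $e^{\chi \cdot (\mathrm{length})}$. Each excursion begins and ends near~$X$; by topological exactness its orbit can be closed up into a pseudo-periodic pseudo-orbit, which is then shadowed by a genuine periodic orbit~$q$ of~$f$. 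Since $\chi_q(f) \ge \chiperf$, the excursion contributes derivative growth of rate at least $e^{\chi \cdot (\mathrm{length})}$, up to a bounded multiplicative constant depending only on the neighborhood of~$X$. Multiplying over all blocks and excursions yields $|Df^n(y)| \ge C e^{n \chi}$ uniformly in~$n$ and~$y$; letting~$\chi \nearrow \chiperf$ gives the desired $\liminf$.

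The main obstacle will be the shadowing-and-closing step for the excursions. In~\cite{PrzRivSmi03} the openness of rational maps as self-maps of the Riemann sphere allows one to splice inverse branches freely, but for an interval map one must ensure that each spliced inverse branch of~$f^{N_0}$ actually lands inside~$X \subset J(f)$ and not in the Fatou set or against $\partial I$. Here the hypothesis~$f \in \sA$, which asserts that~$J(f)$ is completely invariant, together with topological exactness on~$J(f)$ and Fact~\ref{f:positive entropy}, are exactly what is needed so that at each stage an inverse branch can be chosen inside~$X$ and the telescoping estimate can proceed unchanged.
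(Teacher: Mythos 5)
Your high-level framing — use Fact~\ref{f:positive entropy} to produce a uniformly expanding invariant set $X \subset J(f)$, place $x_0$ there, and control derivatives along backward orbits of $x_0$ by comparison with periodic orbits — is in the right spirit, and the role you assign to Fact~\ref{f:positive entropy} matches the paper's remark: it supplies the hyperbolic set that in~\cite{PrzRivSmi03} is available automatically for rational maps, and is not a substitute for openness in that proof. The central mechanism you propose, however, does not work. Shadowing requires uniform hyperbolicity along the pseudo-orbit being shadowed, and an \emph{excursion} is by definition the segment of the orbit outside the hyperbolic set $X$ — precisely where the orbit may pass close to critical points and where hyperbolicity fails — so neither the Anosov closing lemma nor the shadowing lemma applies to it. Moreover, even if a nearby periodic orbit $q$ with $\chi_q(f) \ge \chiperf$ could be produced, pointwise proximity does not give derivative comparability: near a critical point $c$ of order $\ell_c$ one has $|Df| \asymp \dist(\cdot, c)^{\ell_c - 1}$, so two $\varepsilon$-close orbits passing near $c$ can have $k$-th derivatives differing by a factor that is unbounded in $k$. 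The only way to transfer a derivative estimate from $q$ to the excursion is a Koebe/bounded-distortion estimate on a pull-back containing both, not pointwise shadowing.

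Two further structural problems with the block decomposition: first, the initial excursion begins at $y \in f^{-n}(x_0)$, which is not a priori near $X$, so it cannot be "closed up" at both ends; second, each excursion contributes an extra multiplicative constant, the number of excursions may be of order $n$, and short excursions (where the constant is not absorbed by exponential growth over the block) can themselves carry unbounded derivative loss near $\Crit(f)$, so the product over blocks need not yield $C e^{n\chi}$ with a fixed $C$. The argument of~\cite[Lemma~$3.1$]{PrzRivSmi03} avoids all of this by working on a single pull-back of a fixed small ball around $x_0$ rather than excursion-by-excursion: topological exactness on $J(f)$ is used to extend the given backward orbit by a uniformly bounded number of steps into that ball, and the derivative comparison to a periodic orbit is extracted from the Koebe principle on the resulting pull-back, which is where the hypothesis $\chiperf > 0$ enters.
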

In the case where~$f$ is a complex rational map, the following is~\cite[Lemma~$2.1$ and Remark~$2.2$]{PrzRivSmi03}.
The proof applies without change to maps in~$\sA$.
\begin{lemm}
\label{l:exceptional set}
Let~$f \colon I \to I$ be a map in~$\sA$.
Then there are~$\delta_4 > 0$ and a subset~$E$ of~$I$ of zero Hausdorff dimension, such that for every interval~$J$ contained in~$I$ that intersects~$J(f)$ and satisfies~$|J| \le \delta_4$ and every point~$x_0$ in~$J \setminus E$, we have
\begin{multline*}
\liminf_{n \to + \infty} \frac{1}{n} \ln \min \left\{ |Df^n(x)| : x \in f^{-n}(x_0) \right\}
\\ \ge
- \limsup_{n \to + \infty} \frac{1}{n} \ln \max \left\{ |W| : W \text{ connected component of~$f^{-n}(J)$} \right\}.
\end{multline*}
\end{lemm}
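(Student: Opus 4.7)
The strategy adapts the proof of Lemma~$2.1$ of~\cite{PrzRivSmi03}. The Koebe principle will control the distortion of $f^n|_W$ on pull-backs $W$ of $J$ whose enlargement $\hat W$ (a pull-back of the stretched interval $\hat J = \e^{-1} J$) is diffeomorphic; for such ``good'' $W$, one obtains $|Df^n(x)| \ge (2|W|)^{-1}|J| \ge (2 V_n)^{-1}|J|$ for all $x \in W$, where $V_n$ denotes the maximum length of a connected component of $f^{-n}(J)$. Taking $\tfrac{1}{n}\ln$ and passing to the $\liminf$ then yields the asserted inequality, provided the preimages of $x_0$ eventually all lie in good pull-backs.

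My first step is to fix $\delta_5 > 0$ small enough that (i)~the Koebe principle (Lemma~\ref{l:Koebe principle}) applies to every interval of length at most $\delta_5$ meeting $J(f)$ with $K = 2$ and Koebe space ratio $\e^{-1}$, and (ii)~each pull-back of the enlargement $\hat J = \e^{-1}J$ contains at most one critical point, via part~$1$ of Lemma~\ref{l:pull-backs}. For any $J$ satisfying the hypothesis, I classify pull-backs of $J$ by $f^n$ as \emph{good} or \emph{bad} according to whether the containing pull-back $\hat W$ of $\hat J$ is diffeomorphic; on good pull-backs, the Koebe inequality above holds uniformly.

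Next I construct a ``universal'' exceptional set $E$ independent of $J$. A pull-back $\hat W$ of $\hat J$ is bad precisely when some iterate $f^j$ with $0 \le j < n$ maps $\hat W$ through a critical point $c$ of $f$, and then $f^j(\hat W)$ contains a small neighborhood of $c$. Choosing a sequence $r_n \downarrow 0$ that decays faster than $(\sup_I |Df|)^{-n}$ raised to any power, I set
$$E \= \bigcap_{N \ge 1}\bigcup_{n \ge N}\bigcup_{c \in \Crit(f)}\bigcup_{k = 0}^{n} f^{k}\bigl(B(c, r_n)\bigr).$$
For any $s > 0$, each constituent $f^{k}(B(c, r_n))$ has diameter bounded by $(\sup_I |Df|)^{k} \cdot C \, r_n^{\ell_c}$ via non-flatness; if $r_n$ decays fast enough, the total $s$-mass summed over $c$, over $k \le n$, and over $n \ge N$ tends to $0$ as $N \to \infty$, so $\HD(E) = 0$. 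Importantly, both $\delta_5$ and $E$ are intrinsic to $f$ and do not depend on $J$.

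Finally, for $x_0 \in J \setminus E$, the choice of $r_n$ forces the following: if some preimage $x \in f^{-n}(x_0)$ were to lie in a bad pull-back with critical hit at time $j$, then $f^j(x) \in B(c, r_n)$ for some $c \in \Crit(f)$, and hence $x_0 = f^{n-j}(f^j(x)) \in f^{n-j}(B(c, r_n))$, contradicting $x_0 \notin E$ for all sufficiently large $n$. Thus eventually every preimage of $x_0$ lies in a good pull-back, and the Koebe estimate from the opening paragraph delivers the inequality after taking $\liminf$. The main obstacle is the dimension estimate for $E$: one must calibrate $r_n \downarrow 0$ simultaneously against the exponential upper bound on $|Df^k|$, the non-flatness exponents $\ell_c$, and the combinatorial multiplicity of pull-backs, and then verify that the implication ``bad preimage at time $j$'' $\Rightarrow$ ``$x_0 \in f^{n-j}(B(c, r_n))$'' is quantitative and holds uniformly over all admissible intervals $J$.
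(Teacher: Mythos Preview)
Your argument has a genuine gap at the implication in the final paragraph. You claim that if $x \in f^{-n}(x_0)$ lies in a bad pull-back with a critical hit at time~$j$, then $f^j(x) \in B(c, r_n)$. But all you actually know is that $f^j(x)$ and the critical point~$c$ lie in the same pull-back~$\hW_j$ of~$\hJ$ by~$f^{n-j}$; hence $|f^j(x) - c| \le |\hW_j|$, which for small $n - j$ is of order~$|\hJ|$, not~$r_n$. Concretely, if $j = n - 1$ and $f(c) \in \hJ$, then~$\hW_j$ is a pull-back of~$\hJ$ by~$f$, of length comparable to $|\hJ|^{1/\ell_c}$ --- a fixed constant independent of~$n$, far larger than your super-exponentially small~$r_n$. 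So the implication fails, and with it the conclusion that $x_0 \notin E$ forces all preimages into good pull-backs.

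The construction in \cite[Lemma~2.1 and Remark~2.2]{PrzRivSmi03} instead uses neighborhoods \emph{of the forward critical orbit}, not forward images of critical neighborhoods. With a sequence~$\rho_n$ decaying super-polynomially but sub-exponentially (for instance $\rho_n = \exp(-\sqrt n)$), one takes
\[
E = \bigcap_{N \ge 1}\bigcup_{n \ge N}\bigcup_{c \in \Crit(f)}\bigcup_{k = 1}^{n} B\bigl(f^k(c), \rho_n\bigr).
\]
At level~$n$ this is a union of at most $n\cdot\#\Crit(f)$ intervals of length~$2\rho_n$, and super-polynomial decay of~$\rho_n$ gives $\HD(E) = 0$. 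For $x_0 \in J \setminus E$ and~$n$ large, the ball $B(x_0, \rho_n)$ is disjoint from $\{f^k(c): 1 \le k \le n,\ c \in \Crit(f)\}$, so every pull-back of $B(x_0, \rho_n)$ by~$f^n$ is diffeomorphic; the Koebe principle then yields $|Df^n(x)| \ge K^{-1}\e\rho_n/V_n$ (the pull-back containing~$x$ sits inside a pull-back of~$J$, hence has length at most~$V_n$), and since $\tfrac1n\ln\rho_n \to 0$ the desired inequality follows. Note the reversal: your~$r_n$ was forced to decay \emph{faster} than exponentially to absorb forward iteration, whereas the argument actually needs a radius decaying \emph{slower} than exponentially so that it disappears in the~$\liminf$.
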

\begin{proof}[Proof of the Main Theorem']
To prove
\begin{equation}
  \label{e:main equality}
\chiinff = \chiperf,  
\end{equation}
suppose~$f$ is ``infinitely renormalizable,'' see~\cite{dMevSt93} for the definition and for precisions.
It follows easily from the \emph{a priori} bounds in~\cite{vStVar04} that in this case we have~$\chiinff = \chiperf = 0$.
So, to prove~\eqref{e:main equality} it is enough to consider the case where~$f$ is at most finitely renormalizable.
Then~$f$ can be decomposed into finitely many interval maps, each of which has a renormalization with a topologically exact restriction, see for example~\cite[\S{III}, $4$]{dMevSt93}.
Thus, to prove the Main Theorem' it is enough to consider the case where~$f$ is topologically exact.

In part~$1$ below we prove item~$1$ of the theorem with~$\chiinff$ replaced by~$\chiperf$ and in part~$2$ we prove~$\chiperf = \chiinff$.
We complete the proof of the theorem in part~$3$.

\partn{1}
We prove item~$1$ of the theorem with~$\chiinff$ replaced by~$\chiperf$.
This statement being trivial in the case where~$\chiperf = 0$, we suppose~$\chiperf > 0$.
Combining Lemma~\ref{l:UHP implies CE2} and Proposition~\ref{p:CE2 implies ESC} we obtain that all the periodic points of~$f$ in~$J(f)$ are hyperbolic repelling and that for every~$\chi$ in~$(0, \chiperf)$ there is~$\delta(\chi) > 0$ such that for every interval~$J$ that intersects~$J(f)$, that is disjoint from each periodic Fatou component of~$f$, and that satisfies~$|J| \le \delta(\chi)$, the following property holds: For every integer~$n \ge 1$ and every pull-back~$W$ of~$J$ by~$f^n$ we have
$$ |W| \le \exp( -n \chi). $$
Put $\delta' \= \delta(\chiperf/2)$ and let~$J$ be an interval that intersects~$J(f)$, that is disjoint from the periodic Fatou components of~$f$, and that satisfies~$|J| \le \delta'$.
Given~$\chi$ in~$(\chiperf/2, \chiperf)$, let~$N \ge 1$ be sufficiently large so that~$\exp(-N \chi) \le \delta(\chi)$, let~$n \ge N$ be an integer, and let~$W$ be a pull-back of~$J$ by~$f^n$.
If we denote by~$W'$ the pull-back of~$J$ by~$f^N$ containing~$f^{n - N}(W)$, then we have
$$ |W'| \le \exp(-N \chi) \le \delta(\chi). $$
So the property above applied to~$W'$ instead of~$J$ implies
$$ |W| \le \exp(- (n - N) \chi). $$
This proves item~$1$ of the theorem with~$C = \exp(N \chi)$ and with~$\chiinff$ replaced by~$\chiperf$.

\partn{2}
We prove~$\chiperf = \chiinff$.
To prove~$\chiperf \ge \chiinff$, let~$p$ be a hyperbolic repelling periodic point of~$f$ in~$J(f)$ and let~$\nu$ be the probability measure equidistributed on the orbit of~$p$.
Then~$\nu$ is in~$\sM(J(f), f)$ and~$\chi_{\nu}(f) = \chi_p(f)$, so~$\chi_p(f) \ge \chiinff$.
This proves~$\chiperf \ge \chiinff$.
To prove the reverse inequality we show that for every~$\nu$ in~$\sM(J(f), f)$ we have~$\chi_{\nu}(f) \ge \chiperf$.
By the ergodic decomposition theorem we can assume without loss of generality that~$\nu$ is ergodic.
By~\cite[Theorem~B]{Prz93} or by Proposition~\ref{p:Lyapunov are nonnegative} in Appendix~\ref{s:Lyapunov are nonnegative}, we have~$\chi_{\nu}(f) \ge 0$.
We show that for every~$\varepsilon > 0$ there is a point~$x$ in~$J(f)$ such that for every sufficiently large integer~$n \ge 1$ we have
\begin{equation}
\label{e:exponentially small disk}
f^n(B(x, \exp(- (\chi_{\nu}(f) + 2\varepsilon) n )))
\subset
B(f^n(x), \exp(- \varepsilon n)).
\end{equation}
Using this estimate with a sufficiently large~$n$ and combining it with part~$1$ we obtain~$\chi_\nu(f) + 2 \varepsilon \ge \chiperf$.
Since~$\nu$ and~$\varepsilon$ are arbitrary, this proves~$\chiinff \ge \chiperf$, as wanted. 
To prove~\eqref{e:exponentially small disk}, note that by Birkhoff's ergodic theorem there is a point~$x_0$ in~$J(f)$ and an integer~$n_0 \ge 1$ such that for every~$n \ge n_0$ we have
\begin{equation}
\label{e:finite time Lyapunov}
\exp \left( \left( \chi_{\nu}(f) - \tfrac{1}{3} \varepsilon \right) n \right)
\le
|Df^n(x_0)|
\le
\exp \left( \left( \chi_{\nu}(f) + \tfrac{1}{3} \varepsilon \right) n \right).
\end{equation}
On the other hand, since the critical points of~$f$ are non-flat, there are constants~$C_0 > 0$ and~$\alpha > 0$ such that for every~$x$ in~$I$ we have
\begin{displaymath}
  |Df(x)|
  \le
  C_0 \dist(x, \Crit(f))^{\alpha}.  
\end{displaymath}
Put~$\varepsilon' \= \frac{\varepsilon}{\alpha}$.
Using the previous inequality with~$x = f^n(x_0)$, combined with
$$ Df^{n + 1}(x_0) = Df(f^n(x_0)) \cdot Df^n(x_0), $$
with~\eqref{e:finite time Lyapunov} and with~\eqref{e:finite time Lyapunov} with~$n$ replaced by~$n + 1$, we obtain that for every~$n \ge n_0$ we have
$$ \dist(f^n(x), \Crit(f))
\ge
\left( C_0^{-1} \exp(\chi_\nu(f)) \right)^{\frac{1}{\alpha}} \exp \left( - \tfrac{2}{3} \varepsilon' (n + 1) \right). $$
This implies that there is an integer~$n_1 \ge n_0$ such that for every~$n \ge n_1$ the distortion of~$f$ on~$B(f^n(x_0), \exp(- \varepsilon' n))$ is bounded by~$\exp \left( \tfrac{1}{3} \varepsilon' \right)$.
Let~$n_2 \ge n_1$ be sufficiently large so that the distortion of~$f^{n_1}$ on~$B(x_0, \exp(- (\chi_\nu(f) + \varepsilon')n_2))$ is bounded by~$\exp \left( \tfrac{1}{3} \varepsilon' n_1 \right)$.
Then for every~$n \ge n_2$ we have,
\begin{multline}
\label{e:exponentially small inclusion}
f^{n_1}(B(x_0, \exp(- (\chi_\nu(f) + 2\varepsilon')n)))
\\ \subset
B \left( f^{n_1}(x_0), \exp \left(- (\chi_\nu(f) + 2\varepsilon') n + \tfrac{1}{3} \varepsilon' n_1 \right) |Df^{n_1}(x_0)| \right).
\end{multline}
Fix~$n \ge n_2$.
We prove by induction that for every~$j$ in~$\{n_1, \ldots, n \}$ the inclusion above holds with~$n_1$ replaced by~$j$.
The desired assertion is obtained from this with~$j = n$, combined with~\eqref{e:finite time Lyapunov}.
Noting that the case~$j = n_1$ is given by~\eqref{e:exponentially small inclusion} itself, let~$j$ in~$\{n_1, \ldots, n - 1 \}$ be given and suppose~\eqref{e:exponentially small inclusion} holds with~$n_1$ replaced by~$j$.
Then~\eqref{e:exponentially small inclusion} with~$n_1$ replaced by~$j + 1$ is obtained by using that the right hand side of~\eqref{e:exponentially small inclusion} with~$n_1$ replaced by~$j$ is contained in~$B(f^j(x_0), \exp(- \varepsilon' n))$, combined with the fact that the distortion of~$f$ on this last set is bounded by~$\exp \left( \tfrac{1}{3} \varepsilon' \right)$.
This completes the proof of the induction step, and hence that~$\chi_{\nu}(f) \ge \chiperf$ and~$\chiinff = \chiperf$.

\partn{3}
So far we have shown item~$1$ of the theorem and the equality~$\chiinff = \chiperf$.
Let~$\chiper^0(f)$ be as in the statement of Lemma~\ref{l:periodic bound}.
Clearly,
$$ \chiinff \le \chiper^0(f) \le \chiperf $$
(\emph{cf.}, first part of part~$2$), so~$\chiper^0(f) = \chiinff$.
Thus, inequality~\eqref{e:ESC and periodics} of Lemma~\ref{l:periodic bound} and item~$1$ of the theorem imply item~$2$ of the theorem.
In turn, item~$2$ of the theorem together with~\eqref{e:CE2 and periodics} of Lemma~\ref{l:periodic bound} and with Lemma~\ref{l:exceptional set} imply the last assertion of the theorem.
The proof of the theorem is thus complete.
\end{proof}

\section{Conjugacy to a piecewise affine map}
\label{s:conjugacy to affine}
In this section we show that a conjugacy between~$2$ Lipschitz continuous multimodal maps that satisfy the Exponential Shrinking of Components condition\footnote{The Exponential Shrinking of Components condition is defined in~\S\ref{ss:NUH} for non-degenerate smooth interval maps. In this section we apply this definition to multimodal maps.} is bi-H{\"o}lder continuous (Proposition~\ref{p:conjugacy to affine}).
Combined with Lemma~\ref{l:affine model satisfies ESC} below, this proves implication~$5 \Rightarrow 4$ of Corollary~\ref{c:equivalences}.

A multimodal map~$f$ is \emph{expanding}, if there is~$\lambda > 1$ so that for every~$x$ and~$x'$ contained in an interval on which~$f$ is monotonous, we have
$$ |f(x) - f(x')|
\ge
\lambda |x - x'|. $$
In this case we say~$\lambda$ is an \emph{expansion constant of~$f$}.
\begin{lemm}
\label{l:affine model satisfies ESC}
Every expanding multimodal map satisfies the Exponential Shrinking of Components condition.
\end{lemm}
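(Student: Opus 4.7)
Let $\lambda > 1$ be an expansion constant of $f$ and let $T$ denote its finite set of turning points. The plan is first to introduce $d > 0$ equal to the minimum of the distances between distinct points of $T$ (and from points of $T$ to $\partial I$), and then to establish the following one-step estimate: if $V \subset I$ is an interval with $|V| < d$, then every connected component $W$ of $f^{-1}(V)$ contains at most one turning point of $f$. Indeed, if $W$ contained two consecutive turning points $t_1 < t_2$ of $f$, then $f$ would be monotone on $[t_1,t_2] \subset W$ with image in $V$, and expansion would give $|V| \ge \lambda(t_2-t_1) \ge \lambda d > d$, a contradiction. Consequently $f|_W$ has at most two monotone pieces, each mapped $\lambda$-expandingly into $V$, so $|W| \le 2|V|/\lambda$.

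If $\lambda > 2$, this one-step estimate iterates directly. For $|J| \le d/2$, I would follow the preimage chain $W_0 = J, W_1, \ldots, W_n = W$ (with $W_{k+1}$ the connected component of $f^{-1}(W_k)$ containing $f^{n-k-1}(W)$); an easy induction gives $|W_k| \le (2/\lambda)^k |J|$, and since $2/\lambda < 1$ the inductive hypothesis $|W_k| < d$ is preserved throughout. This proves the ESC condition with rate $\lambda/2$.

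The hard part is the case $\lambda \in (1,2]$, where the one-step bound no longer contracts and the direct iteration breaks. My plan is to replace $f$ by $g := f^N$ for an integer $N$ chosen so that $\lambda^N > 2$. The map $g$ is itself multimodal, is $\lambda^N$-expanding on each of its monotone pieces, and its turning set $\bigcup_{j=0}^{N-1} f^{-j}(T)$ is finite (because $f$ is expanding, each $f^{-j}(T)$ is finite), so it admits its own positive gap constant $d_g$. Applying the argument of the previous paragraph to $g$ in place of $f$ yields $|W'| \le (2/\lambda^N)^k |J|$ for every connected component $W'$ of $f^{-Nk}(J) = g^{-k}(J)$, provided $|J| \le d_g/2$.

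To finish, for general $n$ I would write $n = Nk + r$ with $0 \le r < N$. Any connected component $W$ of $f^{-n}(J)$ has $f^r(W)$ contained in some component $W'$ of $f^{-Nk}(J)$ already controlled above, and $W$ lies inside some connected component $W''$ of $f^{-r}(W')$. Since $f^r$ has only finitely many monotone branches (a number depending on $f$ and $N$ only), a crude per-branch estimate yields $|W''| \le C_N |W'|$, whence $|W| \le C \mu^n$ with $\mu := 2^{1/N}/\lambda \in (0,1)$; a slight decrease of the rate absorbs the multiplicative constant. The principal obstacle throughout is the case $\lambda \in (1,2]$, overcome by passing to the iterate $f^N$ with $\lambda^N > 2$ and interpolating for indices $n$ that are not multiples of $N$.
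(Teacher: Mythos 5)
Your proof is correct, and it takes a genuinely different route from the paper's. The paper works with $f$ itself: it fixes $L$ with $\lambda^L > 2$, picks $\delta_\dag$ so that once an iterate $f^i(W)$ meets a turning point, the next $L$ iterates of $W$ avoid all turning points, deduces that $f^n|_W$ has at most $2^{\frac{n}{L}+1}$ laps, and concludes $|W| \le 2^{\frac{n}{L}+1}\lambda^{-n}|J|$. You instead first dispose of the case $\lambda > 2$ by a clean one-step estimate $|W|\le (2/\lambda)|V|$ that iterates directly, and then reduce the case $\lambda\in(1,2]$ to the previous one by passing to the iterate $g = f^N$ with $\lambda^N>2$ (checking that $g$ is again a $\lambda^N$-expanding multimodal map with finite turning set, hence has its own gap constant $d_g$), finishing with a routine interpolation for $n$ not a multiple of $N$. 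Both routes produce the same contraction rate $2^{1/N}\lambda^{-1}$; the quantitative heart is identical. What your route buys is that it avoids the paper's bookkeeping with exposed turning points and the constant $\delta_\dag$, at the cost of having to verify the iterate $f^N$ is again in the class and to absorb the multiplicative constant $C_N$ coming from the remainder $r = n - Nk$ (which you handle, correctly, by a slight decrease of the exponential rate combined with shrinking the admissible $\delta$). One small stylistic point: the finiteness of $\bigcup_{j=0}^{N-1}f^{-j}(T)$ is a consequence of $f$ being multimodal (finitely many monotone branches), not of expansion per se, but this does not affect the argument.
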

In this section, a turning point~$c$ of a multimodal map~$f$ is~\emph{exposed} if for every integer~$n \ge 1$ the point~$f^n(c)$ is not a turning point of~$f$.
\begin{proof}
Let~$f \colon I \to I$ be an expanding multimodal map and let~$\lambda > 1$ be an expansion constant of~$f$.
Let~$L \ge 1$ be a sufficiently large integer so that~$\lambda^L > 2$ and let~$\delta_{\dag} > 0$ be sufficiently small so that for every exposed turning point~$c$ of~$f$ and every~$j$ in~$\{1, \ldots, L \}$ the set~$f^j(B(c, \delta_{\dag}))$ does not contain a turning point of~$f$.
Let~$\delta_* > 0$ be sufficiently small so that for every interval~$J$ contained in~$I$ that satisfies~$|J| \le \delta_*$ and every connected component~$W$ of~$f^{-1}(J)$ we have~$|W| \le \delta_{\dag}$.

We prove by induction on~$n \ge 0$ that for every interval~$J$ contained in~$I$ that satisfies~$|J| \le \delta_*/2$, every~$j$ in~$\{1, \ldots, n \}$, and every pull-back~$W$ of~$J$ by~$f^j$ we have
$$ |W| \le \left( 2^{\frac{1}{L}} \lambda^{-1} \right)^j \delta_*. $$
This implies that~$f$ satisfies the Exponential Shrinking of Components condition.
The case~$n = 0$ being trivial, suppose that for some~$n \ge 1$ this assertion holds with~$n$ replaced by each element of~$\{0, \ldots, n - 1 \}$.
Let~$J$ be an interval contained in~$I$ that satisfies~$|J| \le \delta_*/2$ and let~$W$ be a pull-back of~$J$ by~$f^n$.
The induction hypothesis implies for every~$j$ in~$\{1, \ldots, n - 1 \}$ we have~$|f^j(W)| \le \delta_*$.
Using the hypothesis~$|J| \le \delta_*/2$ and the definition of~$\delta_*$, we conclude that for every~$i$ in~$\{0, \ldots, n - 1 \}$ we have~$|f^i(W)| \le \delta_{\dag}$.
Using the definition of~$\delta_{\dag}$, this implies that the number of those~$i$ in~$\{0, \ldots, n - 1 \}$ such that~$f^i(W)$ contains a turning point of~$f$ in its interior is at most~$\frac{n}{L} + 1$.
It thus follows that~$W$ can be partitioned into at most~$2^{\frac{n}{L} + 1}$ intervals on each of which~$f^n$ is injective.
Using that~$\lambda$ is an expansion constant of~$f$, we obtain
$$ |W|
\le
2^{\frac{n}{L} + 1} \lambda^{-n} |J|
\le
2^{\frac{n}{L}} \lambda^{-n} \delta_*. $$
This completes the proof of the induction hypothesis and of the lemma.
\end{proof}

\begin{prop}
\label{p:conjugacy to affine}
Let~$f \colon I \to I$ be a Lipschitz continuous multimodal map and~${\wtf \colon \tI \to \tI}$ a multimodal map satisfying the Exponential Shrinking of Components condition.
If~$h \colon I \to \tI$ is a homeomorphism conjugating~$f$ to~$\wtf$, then~$h$ is H{\"o}lder continuous.
\end{prop}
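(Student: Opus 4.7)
The plan is to use the Exponential Shrinking of Components condition for~$\wtf$ together with the Lipschitz property of~$f$ to transfer a small scale in~$I$ to a small scale in~$\tI$ via the dynamics. Let~$L \ge 1$ be a Lipschitz constant for~$f$, and let~$\widetilde{\delta} > 0$ and~$\lambda > 1$ be the constants provided by the Exponential Shrinking of Components condition for~$\wtf$. Since~$h$ is a homeomorphism of compact intervals, it is uniformly continuous, so there is~$\delta_0 > 0$ such that every subinterval of~$I$ of length at most~$L \delta_0$ is mapped by~$h$ into an interval of length at most~$\widetilde{\delta}$.

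Given distinct points~$x, y$ in~$I$ with~$|x - y| \le \delta_0$, I would set~$J$ to be the interval with endpoints~$x$ and~$y$, and~$\widetilde{J} \= h(J)$; since~$h$ is monotone,~$\widetilde{J}$ is an interval. The next step is to exhibit a smallest integer~$n \ge 1$ with~$|f^n(J)| > \delta_0$. If no such~$n$ existed, then~$|f^n(J)| \le \delta_0$ for every~$n$, and the choice of~$\delta_0$ would give~$|\wtf^n(\widetilde{J})| = |h(f^n(J))| \le \widetilde{\delta}$ for all~$n$; applying the Exponential Shrinking of Components condition to the pull-back of~$\wtf^n(\widetilde{J})$ by~$\wtf^n$ that contains~$\widetilde{J}$ would then yield~$|\widetilde{J}| \le \lambda^{-n}$ for every~$n$, whence~$|\widetilde{J}| = 0$, contradicting the injectivity of~$h$. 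For this smallest~$n$, the Lipschitz bound~$|f^n(J)| \le L |f^{n - 1}(J)| \le L \delta_0$ and the defining property of~$\delta_0$ give~$|\wtf^n(\widetilde{J})| \le \widetilde{\delta}$, so the Exponential Shrinking of Components condition applied to the pull-back of~$\wtf^n(\widetilde{J})$ by~$\wtf^n$ containing~$\widetilde{J}$ yields~$|\widetilde{J}| \le \lambda^{-n}$. On the other hand,~$\delta_0 < |f^n(J)| \le L^n |J|$ forces~$n > \log(\delta_0 / |x - y|) / \log L$, so
\[
|h(x) - h(y)|
=
|\widetilde{J}|
\le
\lambda^{-n}
\le
(|x - y| / \delta_0)^{\log \lambda / \log L}.
\]
For~$|x - y| > \delta_0$ the crude bound~$|h(x) - h(y)| \le |\tI|$ will absorb into the same estimate up to a multiplicative constant, yielding H{\"o}lder continuity of~$h$ with exponent~$\min \{ 1, \log \lambda / \log L \}$.

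The main obstacle I anticipate is the careful calibration of the threshold: the factor~$L$ lost in a single application of~$f$ must be absorbed by the uniform continuity modulus of~$h$, which is why~$\delta_0$ must be chosen so that the modulus of~$h$ controls intervals of length~$L \delta_0$ rather than merely~$\delta_0$. A subsidiary delicate point is ruling out the scenario where the forward~$f$-orbit of~$J$ stays below~$\delta_0$ forever; this is handled by combining injectivity of~$h$ with the fact that the Exponential Shrinking of Components condition would then force~$|\widetilde{J}| \le \lambda^{-n}$ for all~$n$, collapsing~$\widetilde{J}$ to a point.
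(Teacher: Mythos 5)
Your proof is correct, and it takes a genuinely different route from the paper's. The paper relies on a separate lemma (Lemma~\ref{l:injective time}) which, for an arbitrary interval $\hJ$ in $\tI$, produces a subinterval $J_0\subset\hJ$ and a time $m\lesssim -\ln|\hJ|$ such that $\wtf^m|_{J_0}$ is injective and $|\wtf^m(J_0)|$ is bounded below; the proposition is then obtained by applying this to $\hJ=h(J)$, transporting to the $f$--side via $h^{-1}$, and using the Lipschitz bound $|h^{-1}(J_0)|\ge M^{-m}|h^{-1}(\wtf^m(J_0))|$ together with uniform continuity of $h^{-1}$. Your argument instead works directly on the $f$--side: you iterate $J$ forward under $f$ until its image first exceeds a fixed scale, use the Lipschitz constant of $f$ to bound this escape time from below by $\ln(\delta_0/|J|)/\ln L$, transport to the $\wtf$--side by uniform continuity of $h$, and then invoke the Exponential Shrinking of Components condition on the pull-back containing $h(J)$. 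This bypasses the injective-time lemma entirely and yields a visibly shorter proof, while arriving at essentially the same H{\"o}lder exponent $\ln\lambda/\ln L$. One small point worth stating explicitly: you should take the Lipschitz constant $L>1$ strictly (always possible by enlarging it), since $\ln L$ appears in the denominator; in fact, once the escape time is shown to exist, the inequality $\delta_0<L^n|J|\le L^n\delta_0$ forces $L>1$, so there is no loss in doing so.
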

We deduce this proposition as an easy consequence of the following lemma.
\begin{lemm}
\label{l:injective time}
Let~$f \colon I \to I$ be a multimodal map satisfying the Exponential Shrinking of Components condition with constant~$\lambda > 1$.
Then for every~$A > (\ln \lambda)^{-1}$ there is a constant~$\delta_5 > 0$ such that for every interval~$J$ contained in~$I$ the following property holds: There is an integer~$m \ge 0$ that satisfies~$m \le \max \{ - A \ln |J|, 0 \}$ and an interval~$J_0$ contained in~$J$, such that~$f^m$ is injective on~$J_0$ and~$|f^m(J_0)| \ge \delta_5$.
\end{lemm}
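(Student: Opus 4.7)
The plan is to argue by contradiction, combining the exponential shrinking from the ESC condition with a linear (rather than exponential) bound on the number of maximal intervals of injectivity. Let $\delta > 0$ and $\lambda > 1$ be the constants provided by the Exponential Shrinking of Components condition, let $t$ denote the (finite) number of turning points of $f$, and fix $A > (\ln \lambda)^{-1}$. I will choose $\delta_6 \in (0, \delta)$ small enough—depending only on $A$, $\lambda$, and $t$—for the final estimate below to give a contradiction. If $|J| \ge \delta_6$, simply take $m = 0$ and $J_0 = J$; so assume $|J| < \delta_6 < 1$ and set $n \= \lfloor -A \ln |J| \rfloor$, which is at least $1$ provided $\delta_6 \le e^{-1/A}$. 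It suffices to exhibit a sub-interval $J_0 \subset J$ on which $f^n$ is injective and such that $|f^n(J_0)| \ge \delta_6$.

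Suppose to the contrary that every maximal interval of injectivity $J_1, \dots, J_k$ of $f^n$ in $J$ satisfies $|f^n(J_i)| < \delta_6$. Since $\delta_6 \le \delta$ and $J_i$ is contained in a connected component of $f^{-n}(f^n(J_i))$, the ESC condition yields $|J_i| \le \lambda^{-n}$. The crucial combinatorial step is the linear bound $k \le 1 + nt$. Writing $k_m$ for the number of maximal intervals of injectivity of $f^m$ in $J$, the partition for $f^{m+1}$ refines that for $f^m$ by splitting each piece $J_i^{(m)}$ at the preimages in $J_i^{(m)}$ of turning points contained in $f^m(J_i^{(m)})$. Because $f^m(J) = \bigcup_i f^m(J_i^{(m)})$ is a single interval (the image of a connected set), the total number of added splits across all $i$ is bounded by the number of turning points contained in $f^m(J)$, which is at most $t$. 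Hence $k_{m+1} \le k_m + t$, and inducting from $k_0 = 1$ gives $k = k_n \le 1 + nt$.

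Summing the length bounds then yields
$$ |J| \;=\; \sum_{i=1}^{k} |J_i| \;\le\; (1 + nt)\,\lambda^{-n}. $$
On the other hand, $n \le -A \ln |J|$ gives $|J| \ge e^{-(n+1)/A}$, so
$$ e^{n(\ln \lambda - 1/A)} \;\le\; e^{1/A}(1 + nt). $$
Since $\ln \lambda - 1/A > 0$ by the choice of $A$, the left-hand side grows exponentially in $n$ while the right-hand side is linear in $n$, so this inequality fails for every $n \ge N_0$, where $N_0 = N_0(A, \lambda, t)$ is an explicit threshold. Choosing $\delta_6$ so that $\delta_6 \le \min\{\delta,\, e^{-1/A},\, e^{-(N_0+2)/A}\}$ forces $n \ge N_0$ whenever $|J| < \delta_6$, producing the required contradiction. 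The main obstacle is the counting step: the naive bound $k \le (t+1)^n$ is exponential and useless in the comparison against $\lambda^{-n}$; the point is that because $f^m(J)$ stays a single interval at each stage, the total increment in $k_m$ from one iteration to the next is globally bounded by $t$, not by $t$ per existing branch.
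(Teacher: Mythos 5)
The combinatorial estimate $k_{m+1} \le k_m + t$ is false, and this is the fatal gap. The branch images $f^m(J_i^{(m)})$ overlap in general, so a single turning point lying in $f^m(J)$ may belong to several of the sets $f^m(J_i^{(m)})$ and create one new cut \emph{per branch} that sees it; that $\bigcup_i f^m(J_i^{(m)}) = f^m(J)$ is a single interval does nothing to prevent this. For the tent map of slope $2$, take a short interval $J$ centred at the turning point $c$: then $J_1^{(1)}$ and $J_2^{(1)}$ have identical images $f^m(J_1^{(1)}) = f^m(J_2^{(1)})$ for every $m \ge 1$, and at the first time $M$ this common image covers $c$ again, \emph{both} branches split, so $k_{M+1} = 2 k_M$ (not $k_M + 1$), and $k_m$ grows geometrically from then on. Your hypothesis for contradiction constrains only $|f^n(J_i^{(n)})|$ for the terminal branches, not the intermediate data $f^m(J_j^{(m)})$ with $m < n$ that drive the recursion, so it does not rescue the count. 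Without the linear bound the comparison $|J| \le (1 + nt)\lambda^{-n}$ collapses and no contradiction results.

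The paper's count is genuinely exponential, $k \le 2^{m/L+1}$: one fixes a parameter $L$ and a scale $\delta_\dag$, adapted to the \emph{exposed} turning points, so that if $f^j(J)$ is short and meets a turning point then $f^{j+1}(J), \dots, f^{j+L}(J)$ do not; hence at most $m/L + 1$ critical encounters occur up to time $m$, and for $L$ large the base $2^{1/L}$ is strictly smaller than $\lambda$. Even with this corrected count, though, your one-shot contradiction would not close, because the counting is valid only while $|f^j(J)|$ stays below a fixed scale, i.e.\ only up to the first time $m$ at which $|f^m(J)| > \delta_{\Exp}$, and this $m$ is typically far smaller than $n \approx -A\ln|J|$; for $j > m$ the iterates of $J$ are of macroscopic size and the turning-point count is uncontrolled. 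At that scale one only extracts $J_0 \subset J$ with $f^m|_{J_0}$ injective and $|f^m(J_0)| \ge \frac{\delta_{\Exp}^*}{2}\, 2^{-m/L}$, a lower bound that still decays in $m$; the paper therefore needs an induction on scales (Part~$2$ of its proof) to promote this one-step gain to the uniform constant $\delta_6$.
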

\begin{proof}
Put~$\chi \= \ln \lambda$ and let~$L$ be an integer satisfying $L > (A \chi - 1)^{-1} A \ln 2$.
Let~$\delta_{\dag} > 0$ be sufficiently small so that for every exposed turning point~$c$ of~$f$ and for every~$j$ in~$\{1, \ldots, L \}$, the set~$f^j(B(c, \delta_{\dag}))$ does not contain a turning point of~$f$.
Let~$\delta_{\Exp} > 0$ be the constant~$\delta$ given by the Exponential Shrinking of Components condition, see~\S\ref{ss:NUH}.
Reducing~$\delta_{\Exp}$ if necessary we assume that for every interval~$J$ contained in~$I$ that satisfies~$|J| \le \delta_{\Exp}$, every integer~$n \ge 1$, and every pull-back~$W$ of~$J$ by~$f^n$ we have~$|W| \le \delta_{\dag}$.
Let~$\delta_{\Exp}^* > 0$ be such that for every interval~$J$ contained in~$I$ that satisfies~$|J| \ge \delta_{\Exp}$ and for every connected component~$W$ of~$f^{-1}(J)$ we have~$|W| \ge \delta_{\Exp}^*$.
Reducing~$\delta_{\Exp}^*$ if necessary we assume~$\delta_{\Exp}^* \le \delta_{\Exp}$.
Observing that~$1 + A \frac{\ln 2}{L} < \chi A$, it follows that there is~$n_0 \ge 1$ such that for every integer~$n \ge n_0$ we have,
\begin{equation}
\label{e:starting time}
- A \ln \frac{\delta_{\Exp}^*}{2} + \left(1 + A \frac{\ln 2}{L} \right) n
\le
\chi A n.
\end{equation}

In part~$1$ below we show that every interval contains an interval that is mapped bijectively by an iterate of~$f$ onto a relatively large interval.
In part~$2$ we use this fact to prove the lemma by induction.

\partn{1}
We prove that for every integer~$n \ge 1$ and every interval~$J$ contained in~$I$ that satisfies~$|J| \ge \exp(- (n + 1) \chi)$, there is~$m$ in~$\{ 0, \ldots, n \}$ and an interval~$J_0$ contained in~$J$ such that~$f^m$ is injective on~$J_0$ and
$$ |f^m(J_0)| \ge \frac{\delta_{\Exp}^*}{2} 2^{- \frac{m}{L}}. $$
If~$|J| \ge \delta_{\Exp}$, then the assertion follows with~$J_0 = J$ and~$m = 0$ from our assumption that~$\delta_{\Exp} \ge \delta_{\Exp}^*$.
Assume~$|J| \le \delta_{\Exp}$ and note that by the Exponential Shrinking of Components condition, for every integer~$m \ge n + 1$ we have~$|f^m(J)| > \delta_{\Exp}$.
So there is a largest integer~$m \ge 0$ such that~$|f^m(J)| \le \delta_{\Exp}$ and~$m$ satisfies~$m \le n$.
By definition of~$\delta_{\Exp}^*$ we have~$|f^m(J)| \ge \delta_{\Exp}^*$.
On the other hand, by our choice of~$\delta_{\Exp}$, for every~$j$ in $\{0, \ldots, m - 1 \}$ we have~$|f^j(J)| \le \delta_{\dag}$.
From the definition of~$\delta_{\dag}$ it follows that the number of those~$j$ in~$\{0, \ldots, m - 1 \}$ such that~$f^j(J)$ contains a turning point in its interior is bounded by~$\frac{m}{L} + 1$.
This implies that~$J$ can be partitioned into at most~$2^{\frac{m}{L} + 1}$ intervals on which~$f^m$ is injective.
So, if we denote by~$J_0$ an interval~$J'$ in this partition for which~$|f^m(J')|$ is maximal, then we have
\begin{equation}
  \label{e:relatively big injective}
|f^m(J_0)|
\ge
\frac{|f^m(J)|}{2^{\frac{m}{L} + 1}}
\ge
\frac{\delta_{\Exp}^*}{2} 2^{- \frac{m}{L}}.   
\end{equation}

\partn{2}
Put~$\delta_5 \= \frac{\delta_{\Exp}^*}{2} 2^{- \frac{n_0}{L}}$.
We prove by induction that for every integer~$n \ge 1$ the lemma holds for every interval~$J$ that satisfies~$|J| \ge \exp(- (n + 1) \chi)$.
Part~$1$ implies that this holds for every integer~$n \ge 0$ satisfying~$n \le n_0$.
Let~$n \ge n_0$ be an integer for which the lemma holds for every interval~$J$ that satisfies~$|J| \ge \exp(-n \chi)$.
To prove the inductive step, let~$J$ be a given interval contained in~$I$ that satisfies
$$ \exp(-(n + 1) \chi) \le |J| \le \exp(- n \chi). $$
Let~$m$ be the integer in~$\{0, \ldots, n \}$ and~$J_0$ the interval contained in~$J$ given by part~$1$.
So~$f^m$ is injective on~$J_0$ and
$$ |f^m(J_0)|
\ge
\frac{\delta_{\Exp}^*}{2} 2^{- \frac{m}{L}}
\ge
\frac{\delta_{\Exp}^*}{2} 2^{- \frac{n}{L}}. $$
Together with~\eqref{e:starting time} this implies $|f^m(J_0)| \ge \exp(- n \chi)$, so we can apply the induction hypothesis with~$J$ replaced by~$f^m(J_0)$.
Therefore there is an interval~$J_0'$ contained in~$f^m(J_0)$ and an integer~$m' \ge 0$ satisfying~$m' \le \max \{ - A \ln |f^m(J_0)|, 0 \}$, such that~$f^{m'}$ is injective on~$J_0'$ and~$|f^{m'}(J_0')| \ge \delta_5$.
If~$m' = 0$, then $|f^m(J_0)| \ge |J_0'| \ge \delta_5$.
Together with
$$ m \le n \le - \chi^{-1} \ln |J| < - A \ln |J|, $$
this completes the proof of the induction step in the case where~$m' = 0$.
Suppose~$m' \ge 1$ and let~$\tJ_0$ be the connected component of~$f^{-m}(J_0')$ contained in~$J_0$, so that~$f^m$ is injective on~$\tJ_0$ and~$f^m(\tJ_0) = J_0'$.
Then~$f^{m + m'}$ is injective on~$\tJ_0$ and~$|f^{m + m'}(\tJ_0)| = |f^{m'}(J_0')| \ge \delta_5$.
On the other hand, we have by~\eqref{e:starting time} and~\eqref{e:relatively big injective}
\begin{multline*}
 m + m'
\le
m - A \ln |f^m(J_0)|
\le
- A \ln \frac{\delta_{\Exp}^*}{2} + \left( 1 + A \frac{\ln 2}{L} \right) m
\\ \le
\chi A n
\le
- A \ln |J|.
\end{multline*}

This completes the proof of the induction step with~$m$ replaced by~$m + m'$ and~$J_0$ replaced by~$\tJ_0$.
The proof of the lemma is thus complete.
\end{proof}
\begin{proof}[Proof of Proposition~\ref{p:conjugacy to affine}]
Denote by~$M$ a Lipschitz constant of~$f$, let~$A$ and~$\delta_5$ be as in Lemma~\ref{l:injective time} with~$f$ replaced by~$\wtf$ and let~$\delta_5^* > 0$ be such that for every interval~$J^*$ contained in~$\tI$ that satisfies~$|J^*| \ge \delta_5$, we have~$|h^{-1}(J^*)| \ge \delta_5^*$.

To prove that~$h$ is H{\"o}lder continuous, let~$J$ be an interval contained in~$I$ and let~$m \ge 0$ be the integer and~$J_0$ the interval given by Lemma~\ref{l:injective time} with~$J$ replaced by~$h(J)$, so that
$$ m \le \max \{- A \ln |h(J)|, 0 \},
J_0 \subset h(J),
|\wtf^m(J_0)| \ge \delta_5, $$
and so that~$\wtf^m$ is injective on~$J_0$.
It follows that~$f^m$ is injective on~$h^{-1}(J_0)$, so by the definition of~$\delta_5^*$ we have
$$ |J|
\ge
|h^{-1}(J_0)|
\ge
M^{-m} |h^{-1}(\wtf^m(J_0))|
\ge
\min \{|h(J)|^{A \ln M}, 1\} \cdot \delta_5^*. $$
This proves that~$h$ is H{\"o}lder continuous of exponent~$(A \ln M)^{-1}$.
\end{proof}

\section{Nonuniform hyperbolicity conditions}
\label{ss:proof of corollaries}
The purpose of this section is to prove
Corollaries~\ref{c:equivalences}, \ref{c:topological invariance of exponential mixing} and~\ref{c:phase transitions}.

\begin{proof}[Proof of Corollary~\ref{c:equivalences}]
To prove that conditions~$1$--$7$ are equivalent, remark first that the equivalence between conditions~$1$, $2$, $5$ and~$6$ is given by the Main Theorem', using Fact~\ref{f:positive entropy} for the implication~$5 \Rightarrow 6$.
When~$f$ is a complex rational map, the implication~$5 \Rightarrow 3$ is~\cite[Theorem~C]{PrzRiv07}.
The proof applies without change to the case where~$f$ is a non-degenerate smooth interval map that is topologically exact.\footnote{For a proof written for maps in~$\sA$, see~\cite[Corollary~$2.19$]{RivShe14}. If in addition~$f$ satisfies Collet-Eckmann condition and~$J(f) = I$, see also~\cite{KelNow92,You92} if~$f$ is unicritical, \cite{BruLuzvSt03} if all the critical points of~$f$ are of the same order and~\cite[Theorem~$6$]{GraSmi09} if~$f$ is real analytic.}
When~$f$ is unicritical, the implication~$3 \Rightarrow 2$ is~\cite[Lemma~$8.2$]{NowSan98}.
The proof applies without change to the general case.
We complete the proof that conditions~$1$--$6$ are equivalent by showing the implications~$5 \Rightarrow 4$ and~$4 \Rightarrow 2$.
For the implication~$5 \Rightarrow 4$, recall that by the general theory of Parry~\cite{Par66} and of Milnor and Thurston~\cite{MilThu88}, the map~$f$ is conjugated to a piecewise affine expanding map.
That the conjugacy is bi-H{\"o}lder follows from the combination of Lemma~\ref{l:affine model satisfies ESC} and Proposition~\ref{p:conjugacy to affine}.
When~$f$ is unicritical, the implication~$4 \Rightarrow 2$ is~\cite[Lemma~$8.4$]{NowSan98}.
The proof applies without change to the general case.
This completes the proof that conditions~$1$--$6$ are equivalent.

To complete the proof that conditions~$1$--$7$ are equivalent, we prove that condition~$7$ is equivalent to condition~$4$.
First notice that the conjugacy~$h \colon I \to [0, 1]$ to the piecewise affine model is H{\"o}lder continuous by Lemma~\ref{l:affine model satisfies ESC} and Proposition~\ref{p:conjugacy to affine}.
Thus condition~$4$ is equivalent to the condition that~$h^{-1}$ is H{\"o}lder continuous.
The conjugacy~$h$ is defined in terms of its unique maximal entropy measure~$\rho_f$, as follows: If we denote by~$a$ the left end point of~$I$, then for every~$x$ in~$I$ we have~$h(x) = \rho_f([a, x])$.
Thus, it readily follows that condition~$4$ is equivalent
condition~$7$.

To prove the final statement, note that the Backward Collet-Eckmann condition implies condition~$6$ trivially.
On the other hand, the Collet-Eckmann condition implies condition~$2$ by~\cite[Corollary~$1.1$]{BruvSt03}.
\end{proof}

\begin{rema}
\label{r:equivalences}
Conditions~$1$, $2$, $5$ and~$6$ of Corollary~\ref{c:equivalences} have natural formulations for maps in~$\sA$.
The Main Theorem' implies that, for maps that are essentially topologically exact on their Julia sets, these conditions are equivalent, using Fact~\ref{f:positive entropy} for the implication~$5 \Rightarrow 6$.
Using conformal measures, a condition analogous to condition~$3$ of Corollary~\ref{c:equivalences} can also be stated for a general interval map in~$\sA$.
Our results imply that in this more general setting condition~$3$ is equivalent to conditions~$1$, $2$, $5$ and~$6$.
In fact, the implication~$5 \Rightarrow 3$ is again given by either~\cite[Theorem~C]{PrzRiv07} or~\cite[Corollary~$2.19$]{RivShe14}.
The proof of the implication~$3 \Rightarrow 2$ for unicritical maps in~\cite[Lemma~$8.2$]{NowSan98} does not apply directly to this more general setting, as it uses that the reference measure is the Lebesgue measure.
Using Frostman's lemma, the argument can be adapted to deal with the case where the reference measure is a conformal measure, as in~\cite[Theorem~D]{PrzRiv07} for complex rational maps.
\end{rema}
\begin{rema}
\label{r:CE and CE2}
Both, the Collet-Eckmann and the Backward Collet-Eckmann condition have natural formulations for maps in~$\sA$.
In this more general setting each of these conditions implies conditions~$1$--$3$, $5$, and $6$ of Corollary~\ref{c:equivalences}, see Remark~\ref{r:equivalences}.
In fact, the Backward Collet-Eckmann condition implies condition~$6$ trivially and the Collet-Eckmann condition implies condition~$2$ by~\cite[Corollary~$1.1$]{BruvSt03}.
We note also that for a map in~$\sA$ the Collet-Eckmann condition implies the Backward Collet-Eckmann condition at each critical point of maximal order: For complex rational maps this is given by~\cite[Theorem~$1$]{GraSmi98}; the proof applies without change to maps in~$\sA$.\footnote{In fact, the proof for maps~$\sA$ is slightly simpler, as the arguments involving shrinking neighborhoods can be replaced by the one-sided Koebe principle.}
\end{rema}

\begin{proof}[Proof of Corollary~\ref{c:topological invariance of exponential mixing}]
We show that for a non-degenerate smooth map~${f \colon I \to I}$ having only
hyperbolic repelling periodic points, an iterate of~$f$ has an exponentially mixing acip if and only if:
\begin{enumerate}
\item[(*)]
There is an interval~$J$ contained in~$I$ and an integer~$s \ge 1$, such that~$f^s(J) \subset J$ and such that~$f^s \colon J \to J$ is a topologically exact map that satisfies the TCE condition.
\end{enumerate}
Since~(*) is clearly invariant under topological conjugacy preserving critical points, this implies the corollary.

If~(*) is satisfied, then~$f^s|_J$ is non-injective and therefore it is a non-degenerate smooth interval map.
Then Corollary~\ref{c:topological invariance} implies that~$f^s|_J$, and hence~$f^s$, has an exponentially mixing acip.

Suppose there is an integer~$s \ge 1$ such that~$f^s$ has an exponentially mixing acip~$\nu$, and denote by~$J$ the support of~$\nu$.
Then~$J$ is an interval, $f^s(J) \subset J$, and~$f^s|_J$ is topologically exact, see~\cite[Theorem~E($2$)]{vStVar04}.
It follows that~$f^s_J$ is non-injective and therefore that~$f^s|_J$ is a non-degenerate smooth interval map.
Thus Corollary~\ref{c:topological invariance} implies that~$f^s|_J$ satisfies the TCE condition.
This proves that~$f$ satisfies~(*), and completes the proof of the corollary.
\end{proof}

\begin{rema}
  \label{r:topological invariance of exponential mixing}
The proof of Corollary~\ref{c:topological invariance of exponential mixing} applies without change to maps in~$\sA$.
\end{rema}

\begin{proof}[Proof of Corollary~\ref{c:phase transitions}]
Denote by~$I$ the domain of~$f$.
Recall from~\S\ref{ss:phase transitions} that~$P$ in nonincreasing,
that it has at least one zero, and that its first zero~$t_0$ is in~$(0, 1]$.

The implication~$2 \Rightarrow 1$ is trivial, and the implication~$2
\Rightarrow 3$ is a direct consequence of the fact that~$P$ is
nonincreasing.
Since~$P$ has at least one zero, the implication~$3 \Rightarrow 2$ also follows from the fact that~$P$ is nonincreasing.

To prove the implication~$2 \Rightarrow 4$, suppose~$2$ holds.
Since the first zero of~$P$ is in~$(0, 1]$, we have~$P(2) = 0$.
So for each~$\chi > 0$ there is an ergodic measure~$\nu$ in~$\sM(I, f)$ satisfying~$h_\nu(f) - 2 \chi_\nu(f) \ge - \chi$.
By~\cite[Theorem~B]{Prz93} or Proposition~\ref{p:Lyapunov are nonnegative}, we have~$\chi_{\nu}(f) \ge 0$.
Combined with Ruelle's inequality
$$ h_{\nu}(f) \le \max \{0, \chi_{\nu}(f) \} = \chi_{\nu}(f), $$
see~\cite{Rue78}, we obtain
$$ 2 \chi_\nu(f) \le h_{\nu}(f) + \chi \le \chi_{\nu}(f) + \chi
\text{ and }
\chi_\nu(f) \le \chi. $$
Since~$\chi$ is arbitrary, this shows that~$\chiinff = 0$ and completes the proof of the implication~$2 \Rightarrow 4$.

To prove the implication~$4 \Rightarrow 3$, suppose~$\chiinf(f) = 0$,
and let~$t > t_0$ and~$\chi > 0$ be given.
Then there is a measure~$\nu$ in~$\sM(I, f)$ such that~$\chi_\nu(f) < \chi$, so
$$ P(t)
\ge
h_{\nu}(f) - t \chi_{\nu}(f)
\ge
- t\chi. $$
Since~$\chi > 0$ is arbitrary we conclude that~$P(t) \ge 0$ and hence that~$P$ is nonnegative.

We complete the proof of the corollary by showing the implication $1
\Rightarrow 4$.
Suppose~$\chiinf(f) > 0$, so that
$$ t_+
\=
\sup \{ t > 0 : P(t) > - t \chiinf(f) \} $$
satisfies~$t_+ > t_0$.
By~\cite[Theorem~A]{PrzRiv1405} the function~$P$ is real analytic
on~$(0, t_+)$, and hence at~$t = t_0$.
This proves that~$f$ does not have a high-temperature phase
transition, and completes the proof of the implication~$1 \Rightarrow 4$ and of the corollary.
\end{proof}

\begin{rema}
\label{r:phase transitions}
Each of the conditions~$1$--$4$ of Corollary~\ref{c:phase transitions} have natural formulations in the case where~$f$ is an interval map in~$\sA$.
The proof of Corollary~\ref{c:phase transitions} applies without change in this more general setting.
\end{rema}

\appendix
\section{Lyapunov exponents are nonnegative}
\label{s:Lyapunov are nonnegative}
In this appendix we prove the following general result characterizing those invariant measures whose Lyapunov exponent is strictly negative (possibly infinite).
For smooth interval maps with a finite number of non-flat critical points, this was shown by Przytycki in~\cite[Theorem~B]{Prz93}.
We give a proof of this important fact that avoids the Koebe principle and applies to continuously differentiable maps.
It is considerably shorter than the proof in~\cite{Prz93} and extends without change to complex rational maps.

For a continuously differentiable interval map~$f$, a periodic
orbit of~$f$ of period~$n$ is \emph{strictly attracting}, if for
each point~$p$ in this orbit~$|Df^n(p)| < 1$.
For a Borel measure~$\nu$ on a topological space~$X$, we use~$\supp(\nu)$ to denote the support of~$\nu$, which is by definition the set of all points in~$X$ such that the measure of each of its neighborhoods is strictly positive.
\begin{prop}
  \label{p:Lyapunov are nonnegative}
Let~$f$ be a continuously differentiable interval map and let~$\nu$ be an ergodic invariant probability measure.
Then either~$\chi_{\nu}(f) \ge 0$ or~$\nu$ is supported on a strictly attracting periodic orbit of~$f$.
\end{prop}
\begin{proof}
Suppose~$\chi_{\nu}(f) < 0$.
By the dominated convergence theorem there exists~$L > 0$ such that the function
$$
\varphi \= \max \{ \ln |Df|, - L \}
$$
satisfies $A \= \int \varphi \dd \nu < 0$.
Fix~$\chi$ in~$(0, - A/3)$ and for each integer~$n \ge 1$ put
$$ S_n(\varphi)
\=
\varphi + \varphi \circ f + \cdots + \varphi \circ f^{n - 1}. $$

\partn{1}
We show that for every point~$x$ in the domain~$I$ of~$f$ satisfying
$$ \lim_{n \to + \infty} \tfrac{1}{n} S_n(\varphi)(x) = A, $$
there exists $\tau > 0$ such that for every sufficiently large integer~$n$ we have $|Df^n| \le \exp(-\chi n)$ on $B(x, \tau)$.
Fix such~$x$ in~$I$ and let $\delta > 0$ be such that we have $|Df| \le \exp(-L)$ on $B(\Crit(f), \delta)$.
As~$f$ is continuously differentiable there is~$\varepsilon$ in~$(0, \delta/3)$ such that the distortion of~$f$ on an interval of length at most~$\varepsilon$ and disjoint from $B(\Crit(f), \delta/3)$ is at most~$\exp(\chi)$.
By our choice of~$\chi$ there is $\tau > 0$ so that for every $n \ge 0$ we have
$$
\tau \exp( S_n(\varphi)(x) + 3 n \chi) < \varepsilon / 2.
$$
Finally, for each $n \ge 0$ put
$$
r_n \= \tau \exp( S_n(\varphi)(x) + n\chi)
\text{ and }
B_n \= B(f^n(x), r_n).
$$
Note that we have $|B_n| = 2r_n \le \varepsilon \exp(- 2 n\chi)$.

We show that for every $n \ge 0$ we have $|Df| \le \exp( \varphi(f^n(x)) + \chi)$ on~$B_n$. 
This implies that $f(B_n) \subset B_{n + 1}$ and by induction that on~$B(x, \tau)$ we have
$$
|Df^n|
\le
\exp(S_n(\varphi)(x) + \chi n)
\le
\tau^{-1} (\varepsilon / 2) \exp(-2 n \chi).
$$
It then follows that for large~$n$ we have $|Df^n| \le \exp(-\chi n)$ on~$B(x, \tau)$, as wanted.

\partn{Case 1}
$f^n(x) \not \in B(\Crit(f), 2\delta/3)$.
Since the length of~$B_n$ is less than~$\varepsilon < \delta /3$, it follows that the interval~$B_n$ is disjoint from~$B(\Crit(f), \delta/3)$ and that the distortion of~$f$ on~$B_n$ is bounded by $\exp(\chi)$.
So on~$B_n$ we have
$$
|Df|
\le
|Df(f^n(x))| \exp(\chi)
\le
\exp(\varphi(f^n(x)) + \chi).
$$

\partn{Case 2}
$f^n(x) \in B(\Crit(f), 2\delta/3)$.
Then $B_n \subset B(\Crit(f), \delta)$ and by our choice of~$\delta$  we have $|Df| \le \exp(-L)$ on~$B_n$.

\partn{2}
By Birkhoff's ergodic theorem the set of points~$x$ satisfying the property described in part~$1$ has full measure with respect to~$\nu$.
We can thus find such a point~$x$ in~$\supp(\nu)$, such that in addition its orbit is dense in~$\supp(\nu)$.
Let~$\tau > 0$ be given by the property described in part~$1$ for this choice of~$x$.
Then there is an integer~$n \ge 1$ such that $|Df^n| \le \exp(-n\chi) \le \tfrac{1}{4}$ on~$B(x, \tau)$ and such that~$f^n(x)$ is in~$B(x, \tau/4)$.
Then
$$ f^n(B(x, \tau)) \subset B(f^n(x), \tau/2) $$
and~$f^n$ is uniformly contracting on~$B(x, \tau)$.
This implies that~$x$ is asymptotic to a strictly attracting periodic point of~$f$.
Since~$x$ is in~$\supp(\nu)$ and~$\nu$ is ergodic, it follows
that~$\nu$ is supported on a strictly attracting periodic orbit of~$f$.
\end{proof}

\bibliographystyle{alpha}

\end{document}